\numberwithin{equation}{section}
\newtheorem{theorem}{Theorem}[section]
\newtheorem{lemma}[theorem]{Lemma}
\def\beq{\begin{equation}}
\def\eeq{\end{equation}}
\def\be{\begin{equation*}}
\def\ee{\end{equation*}}
\begin{document}

\begin{centerline}{\bf SURJECTIVELY RIGID CHAINS}\end{centerline}

\vspace{.2in}

\begin{centerline}{M. Montalvo-Ballesteros and J.K. Truss}\end{centerline}

\vspace{.2in}

\begin{centerline}{Department of Pure Mathematics,
          University of Leeds,
          Leeds LS2 9JT, UK}\end{centerline}
          
\begin{centerline}{email pmtjkt@leeds.ac.uk}\end{centerline}
  
\newpage

\title{Surjectively rigid chains}

\author{M. Montalvo-Ballesteros and J.K. Truss}
  \address{Department of Pure Mathematics,
          University of Leeds,
          Leeds LS2 9JT, UK}
  \email{pmtjkt@leeds.ac.uk}

\keywords{linear order, rigid  \\
This paper is based on part of the first author's PhD thesis at the University of Leeds}

\subjclass[2010]{06A05, 03C64}

\begin{abstract}
We study rigidity properties of linearly ordered sets (chains) under automorphisms, order-embeddings, epimorphisms, and endomorphisms. We focus on two main cases, dense subchains of the real numbers, and 
uncountable dense chains of higher (regular) cardinalities. We also give a Fraenkel--Mostowski model which illustrates the role of the axiom of choice in one of the key proofs. 
\end{abstract}

\maketitle


\section{\textbf{Introduction}}\label{intro}

A classical construction by Dushnik and Miller \cite{Dushnik} shows that there is a dense subchain of the real numbers which is rigid, meaning that its only order-automorphism is the identity map. This is 
constructed by transfinite induction using the fact that the number of `potential' order-automorphisms is $2^{\aleph_0}$. In fact the chain that they construct has the stronger property that it has no 
non-trivial order-embeddings, and in \cite{Droste} it was shown that one can construct dense subchains of $\mathbb R$ which are (automorphism-)rigid, but which nevertheless admit many embeddings. 

It is our purpose in this paper to treat similar questions about epimorphisms, and also to make some remarks about general endomorphisms. To make sense of this, we have to work with the reflexive relation 
$\le$ on a linear order, as any endomorphism of the strict relation $<$ is necessarily injective. An {\em epimorphism} of $(X, \le)$ is then a surjective map $f$ from $X$ to $X$ which preserves $\le$, that 
is, $x \le y \Rightarrow f(x) \le f(y)$. For a general class of maps we would say that $(X, \le)$ is {\em rigid} with respect to that class of maps if the only member of the class which preserves $(X, \le)$ 
is the identity. Thus the classical notion of Dushnik and Miller is automorphism-rigidity, and we get corresponding notions of embedding-rigidity and epimorphism-rigidity.

The strongest possible notion that one could consider is `endomorphism-rigidity', where an endomorphism of $(X, \le)$ is any map preserving $\le$ (now not necessarily injective or surjective). This is 
however an essentially vacuous notion, since any constant map is clearly an endomorphism, and there are $|X|$ of these. So the nearest analogue of the notion of rigidity here is that the only endomorphisms 
which exist are ones which, in a sense to be made precise, are `unavoidable'.

The correct context for these discussions seems to be that of certain monoids which arise naturally in the study of the symmetry properties of chains. The most obvious of these, corresponding to the above 
discussion, are the monoid (group actually) of all automorphisms, Aut$(X, \le)$, and those of all the embeddings, Emb$(X, \le)$, epimorphisms Epi$(X, \le)$, and endomorphisms End$(X, \le)$. We write these 
as $G$, $M$, $S$, and $E$ respectively. An extension of the problem of making one or other of these, but not all, trivial, asks rather what the possible values of these monoids are. This is clearly a lot 
more complicated, but we are able to give connections between these monoids in a few cases. One of these is as follows. By applying the axiom of choice, it is quite easy to see that if $S$ is non-trivial, 
then so is $M$. A stronger version of this would be to show either that there is a monoid embedding of $S$ into $M$, or that $S$ is a homomorphic image of $M$. The use of AC is however quite blatant, and so 
it is not clear whether either of these is true. We are able to show at least that $|S| \le |M|$. Furthermore, we give a model of set theory (without choice) in which $M$ is trivial but $S$ is not.

In the final section we adapt the other part of \cite{Droste}, which treats dense chains of larger cardinalities, and we show that similar techniques, involving stationary sets, can be used to give parallel 
results for epimorphisms in this context.

\section{Preliminary results}
 
\begin{lemma}\label{2.1}
There are $2^{\aleph_0}$ epimorphisms of $({\mathbb N}, \le)$.
\end{lemma}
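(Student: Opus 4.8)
The plan is to pin down exactly what an epimorphism of $(\mathbb{N}, \le)$ is, and then count. By the definition in the excerpt, such an epimorphism is a surjective, order-preserving (hence non-decreasing) map $f : \mathbb{N} \to \mathbb{N}$. Since the total number of functions from $\mathbb{N}$ to $\mathbb{N}$ is $\aleph_0^{\aleph_0} = 2^{\aleph_0}$, the upper bound is immediate, and the whole content lies in exhibiting $2^{\aleph_0}$ of them.

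For the lower bound I would first record the structural fact that a non-decreasing surjection $f$ is completely determined by its fibres: because $f$ is non-decreasing, each fibre $f^{-1}(k)$ is a (possibly empty) block of consecutive integers, and because $f$ is surjective, each such block is non-empty. Thus $f$ corresponds precisely to a partition of $\mathbb{N}$ into consecutive non-empty finite blocks $B_0 < B_1 < B_2 < \cdots$, with $f$ sending $B_k$ to $k$; conversely, every such block partition yields an epimorphism. It then suffices to produce $2^{\aleph_0}$ distinct block partitions.

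I would do this concretely. To each function $g : \mathbb{N} \to \{1, 2\}$ associate the epimorphism $f_g$ whose $k$-th block $B_k$ has size $g(k)$, the blocks being laid end to end starting at $0$: setting $a_{-1} = -1$ and $a_k = a_{k-1} + g(k)$, one defines $f_g(n) = k$ for $a_{k-1} < n \le a_k$. Each $f_g$ is non-decreasing and surjective, and distinct $g$ give distinct $f_g$, since at the least $k$ where $g$ and $g'$ differ the corresponding block sizes, and hence the values of the two maps, already disagree. As there are $2^{\aleph_0}$ functions $g : \mathbb{N} \to \{1,2\}$, this furnishes the required lower bound and, together with the upper bound, completes the proof.

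There is no serious obstacle here; the only point needing a moment's care is the verification that each $f_g$ is genuinely surjective, which holds because there are infinitely many blocks, one for every $k \in \mathbb{N}$, and each is non-empty, so every value $k$ is attained.
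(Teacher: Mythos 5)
Your proof is correct and follows essentially the same route as the paper's: both exhibit an explicit injection from $2^{\mathbb N}$ into ${\rm Epi}(\mathbb N,\le)$ by varying the sizes of the (consecutive, finite) fibres of a non-decreasing surjection, the paper via the formulas $f_A(2n)=n$, $f_A(2n+1)\in\{n,n+1\}$ and you via block sizes $g(k)\in\{1,2\}$. Your additional observation that \emph{every} epimorphism of $(\mathbb N,\le)$ arises from such a block partition is a harmless (and correct) extra, not needed for the lower bound.
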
 

\begin{proof} For each $A \subseteq {\mathbb N}$ define $f_A: {\mathbb N} \to {\mathbb N}$ by 

$f_A(2n) = n$

$f_A(2n+1) = \left\{ \begin{array}{lcr} n+1 & \mbox{ if } & n \in A \\
                                         n  & \mbox{ if } & n \not \in A \end{array} \right.$
             
Then $\{f_A: A \subseteq {\mathbb N}\}$ is a family of $2^{\aleph_0}$ distinct epimorphisms of $({\mathbb N}, \le)$.
\end{proof}

\begin{theorem} \label{2.2}  If $(X, \le)$ is a chain having a non-identity epimorphism, then it has at least $2^{\aleph_0}$ epimorphisms (so if $X$ is a dense subchain of $\mathbb R$, $(X, \le)$ has 
exactly $2^{\aleph_0}$ epimorphisms). In fact, {\rm Epi}$({\mathbb N}, \le)$ can be embedded in {\rm Epi}$(X, \le)$, and so {\rm Epi}$(X, \le)$ is not commutative.
\end{theorem}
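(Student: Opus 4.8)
The plan is to prove everything through a single monoid embedding Epi$(\mathbb{N},\le)\hookrightarrow$ Epi$(X,\le)$; the cardinality bound and the non-commutativity then follow formally. First I would reduce to a non-injective epimorphism. An injective epimorphism of a chain is an order-automorphism, so if the given non-identity epimorphism is injective it is a non-trivial automorphism $h$; replacing $h$ by $h^{-1}$ if necessary, fix $x_0$ with $h(x_0)>x_0$ and set $g$ equal to the identity on $\{x:x\le x_0\}$, constantly $x_0$ on $\{x:x_0<x\le h(x_0)\}$, and $h^{-1}$ on $\{x:x>h(x_0)\}$. One checks directly that $g$ is order-preserving and surjective and that $g(x_0)=g(h(x_0))=x_0$, so $g$ is a non-injective epimorphism. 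Thus I may assume $X$ carries an epimorphism collapsing a non-degenerate interval $[a,b]$ ($a<b$) to a point.

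Second I would set up a transport principle: for any convex $D\subseteq X$, extending a self-map of $D$ by the identity on $X\setminus D$ sends Epi$(D)$ into Epi$(X)$, and because $D$ is convex this extension is an injective monoid homomorphism. So it suffices to locate a convex block $D$ of $X$ for which Epi$(\mathbb{N})\hookrightarrow$ Epi$(D)$. The collapsed interval supplies the room to find such a $D$: in the discrete situation one locates a convex copy of $C\times_{\mathrm{lex}}\mathbb{N}$, while in the dense situation (the case of a dense $X\subseteq\mathbb{R}$) one uses a self-similar dense block, filled in as described below.

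The core is the embedding into Epi$(D)$. On the non-dense chain $C\times_{\mathrm{lex}}\mathbb{N}$ (major coordinate $C$, minor coordinate $\mathbb{N}$) the assignment $g^{*}(c,n)=(c,g(n))$ is a clean injective monoid homomorphism Epi$(\mathbb{N})\hookrightarrow$ Epi$(C\times_{\mathrm{lex}}\mathbb{N})$: surjectivity of $g$ gives surjectivity of $g^{*}$, the major coordinate keeps $g^{*}$ order-preserving even where $g$ collapses, and $g_1^{*}\circ g_2^{*}=(g_1g_2)^{*}$ with $g\mapsto g^{*}$ injective are immediate. For a dense block this fails, since $C\times_{\mathrm{lex}}\mathbb{N}$ is never dense, and here the key observation is that every epimorphism $g$ of $\mathbb{N}$ satisfies $g(n+1)-g(n)\in\{0,1\}$ (a skipped value could have no preimage). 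Consequently, after filling each jump of $\mathbb{Q}\times_{\mathrm{lex}}\mathbb{N}$ by a fixed copy of $\mathbb{Q}$ to obtain a dense order $\cong\mathbb{Q}$, the action $g^{*}$ extends to each inserted copy either as the identity onto the next inserted copy (when $g(n+1)=g(n)+1$) or by collapse to a point (when $g(n+1)=g(n)$); the step-$\le 1$ property is exactly what makes this extension well defined, surjective, and — because the inserted maps are identities or collapses — still a homomorphism, giving Epi$(\mathbb{N})\hookrightarrow$ Epi$(\mathbb{Q})$. I expect the main obstacle to be precisely this point: the natural $\mathbb{N}$-action lives on a non-dense product, and extending it coherently to a dense block without destroying the homomorphism property is what forces one to use the step-$\le 1$ structure of epimorphisms of $\mathbb{N}$ (a naive ``blockwise isomorphism'' compression provably cannot be made associative, as there is no associative order-isomorphism $Q+Q\cong Q$).

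Finally I would read off the consequences. Epi$(\mathbb{N})$ is non-commutative — for instance the elementary merges $\mu_0,\mu_1$, where $\mu_i$ identifies $i$ and $i+1$, satisfy $\mu_0\mu_1\ne\mu_1\mu_0$ — so its embedded image makes Epi$(X)$ non-commutative. By Lemma \ref{2.1}, $|\mathrm{Epi}(\mathbb{N})|=2^{\aleph_0}$, whence Epi$(X)$ has at least $2^{\aleph_0}$ epimorphisms; and when $X$ is a dense subchain of $\mathbb{R}$ every order-preserving self-map is determined by its values on a countable dense subset together with its countably many jump-values, giving the matching upper bound $2^{\aleph_0}$.
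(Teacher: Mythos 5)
Your overall architecture -- prove everything by embedding ${\rm Epi}(\mathbb N,\le)$ as a monoid into ${\rm Epi}(X,\le)$ and then read off the cardinality bound and non-commutativity -- is exactly the paper's, and several of your ingredients are sound: the transport principle for convex $D$, the automorphism-to-collapse reduction, the observation that $\theta(n+1)-\theta(n)\in\{0,1\}$ for $\theta\in{\rm Epi}(\mathbb N,\le)$, and the non-commuting merges $\mu_0,\mu_1$. The gap is the central structural claim that a chain with a non-identity epimorphism must contain a convex block $D$ isomorphic either to $C\times_{\rm lex}\mathbb N$ (i.e.\ $C$ consecutive copies of $\mathbb N$) or to a countable dense chain such as $\mathbb Q$. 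This is unjustified and false in the cases that matter most. Take $X$ to be a dense subchain of $\mathbb R$ meeting every nonempty open interval in $2^{\aleph_0}$ points and admitting a non-identity epimorphism (e.g.\ the chain of Theorem \ref{3.2}): every nontrivial convex subset of $X$ is uncountable and densely ordered, so $X$ has no convex copy of $C\times_{\rm lex}\mathbb N$ (which would yield a convex copy of $\omega$) and no convex copy of $\mathbb Q$; the only available convex blocks are intervals of $X$, for which the problem is exactly the original one, so your reduction is circular there. Moreover, the configuration an epimorphism $f$ actually hands you is the \emph{opposite} product: surjectivity yields $a_0<a_1<a_2<\ldots$ with $fa_{n+1}=a_n$, i.e.\ $\omega$ consecutive blocks $[a_n,a_{n+1}]$ whose internal structure is arbitrary and which are related by surjections, not isomorphisms. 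On that configuration the coordinatewise action $(n,c)\mapsto(\theta(n),c)$ fails to be order-preserving wherever $\theta$ collapses -- the very difficulty your $\mathbb Q$-filling trick is designed to meet, but which it meets only for the single chain $\mathbb Q$.

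The missing idea, which is the heart of the paper's proof, is to use powers of $f$ itself as the connecting maps between blocks. Since $fa_{n+1}=a_n$ and the fibres of $f$ are convex, $f$ maps $[a_{n+1},a_{n+2}]$ \emph{onto} $[a_n,a_{n+1}]$. One then defines $f_\theta$ to fix every point outside $\bigcup_n[a_n,a_{n+1}]$, to send $[a_n,a_{n+1}]$ to the singleton $\{a_{\theta(n)}\}$ when $\theta(n+1)=\theta(n)$, and to act as $f^{\,n-\theta(n)}$, mapping $[a_n,a_{n+1}]$ onto $[a_{\theta(n)},a_{\theta(n)+1}]$, when $\theta(n+1)=\theta(n)+1$. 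This requires no hypothesis whatsoever on the internal structure of the blocks, works for an arbitrary chain $X$, and $\theta\mapsto f_\theta$ is an injective monoid homomorphism because $f_\theta$ acts on $\{a_n:n\in\mathbb N\}$ precisely as $\theta$ acts on $\mathbb N$. Replacing your search for a special convex block by this construction closes the gap; your concluding deductions (cardinality via Lemma \ref{2.1}, the upper bound for dense $X\subseteq\mathbb R$, and non-commutativity) then go through as you wrote them.
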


\begin{proof} Let $f \in {\rm Epi}(X, \le)$ be non-trivial. Then $f$ moves some point $a_1$ say. Let $a_0 = fa_1$, and without loss of generality assume that $a_0 < a_1$. Since $f$ is surjective we may 
choose $a_n \in X$ such that for each $n$, $fa_{n+1} = a_n$. It follows that $a_0 < a_1 < a_ 2< \ldots$ (since if $a_{n+1} \le a_n$ then by applying $f^n$ we would find that $a_1 \le a_0$). 

For each epimorphism $\theta$ of $({\mathbb N}, \le)$ we find a corresponding epimorphism $f_\theta$ of $(X, \le)$. Since 
$f_\theta$ will map $a_n$ to $a_{\theta(n)}$, distinct epimorphisms of $({\mathbb N}, \le)$ give rise to distinct epimorphisms 
of $(X, \le)$. 

Observe that $\theta(n) \le n$, and for each $n$, $\theta(n+1) = \theta(n)$ or $\theta(n) + 1$. Let $f_\theta$ fix all the
points not in $\bigcup_{n \in {\mathbb N}}[a_n, a_{n+1}]$ and let $f_\theta$ map $a_n$ to $a_{\theta(n)}$. Finally we have to 
define $f_\theta$ `in between' on each $[a_n, a_{n+1}]$. This is done by mapping all of $[a_n, a_{n+1}]$ to $\{a_{\theta(n)}\}$
if $\theta(n) = \theta(n+1)$ and $[a_n, a_{n+1}]$ to $[a_{\theta(n)}, a_{\theta(n)+1}]$ by $f^{n-\theta(n)}$ if 
$\theta(n+1) = \theta(n) + 1$.

It is easy to check that this is a monoid embedding of Epi(${\mathbb N}, \le)$ into Epi($X, \le$) (essentially because $f_\theta$ acts on $\{a_n: n \in {\mathbb N}\}$ in precisely the way that $\theta$ acts 
on $\mathbb N$). Hence $|Epi(X, \le)| \ge 2^{\aleph_0}$. One can verify directly that if $X$ is a dense subchain of $\mathbb R$, then $|Epi(X, \le)| \le 2^{\aleph_0}$ (or else one may appeal to the next 
theorem).

For the final sentence it is checked that Epi(${\mathbb N}, \le$) is not commutative. \end{proof}

\begin{theorem} \label{2.3} If $(X, \le)$ is a chain such that {\rm Epi}$(X, \le)$ is non-trivial, then so is {\rm Emb}$(X, \le)$. If $(X, \le)$ is densely ordered, then 
$|{\rm Epi}(X, \le)| \le |{\rm Emb}(X, \le)|$. \end{theorem}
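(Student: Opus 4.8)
My plan is to base everything on the elementary observation that the point-inverses (fibres) of an epimorphism are convex and are ordered among themselves exactly as $X$ is. Concretely, if $f\in{\rm Epi}(X,\le)$ and $f(x)=f(x')=y$ with $x\le z\le x'$, then $y=f(x)\le f(z)\le f(x')=y$, so $f(z)=y$; hence each fibre $B_y:=f^{-1}(y)$ is convex, and if $y<y'$ then every element of $B_y$ precedes every element of $B_{y'}$. Consequently any \emph{section} of $f$ (a map $g\colon X\to X$ with $g(y)\in B_y$ for each $y$, which exists by the axiom of choice since $f$ is onto) is strictly increasing, and so is an order-embedding. Since $f\circ g={\rm id}$, if $g$ were the identity then $f$ would be the identity; thus for a non-identity $f$ every section $g$ is a non-identity embedding, which proves the first assertion.

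For the inequality $|{\rm Epi}(X,\le)|\le|{\rm Emb}(X,\le)|$ when $X$ is dense, the plan is to turn the section construction into an \emph{injection} $f\mapsto h_f$ from ${\rm Epi}$ to ${\rm Emb}$. The natural first attempt, sending $f$ to a section $g_f$, is not injective: $g_f$ only records a transversal $T=g_f(X)$ of the fibre partition, and two different epimorphisms can share a transversal. I would therefore analyse precisely what a transversal forgets. Because $X$ is dense its quotient $X/{\sim_f}$ is dense, so between any two distinct fibres there lies a third; hence the fibres are densely arranged and the only ambiguity in reconstructing the partition from $T$ occurs at a ``gap'' of $T$, where exactly two consecutive fibres meet along a single cut of $X$. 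The task is then to enrich the embedding so that it also records these boundary cuts. Here the density hypothesis is used a second time: a non-degenerate fibre is a convex dense subchain and so contains infinitely many points, giving room to place additional, canonically chosen values of $h_f$ that pin down the boundary between neighbouring fibres. Once $h_f$ determines the fibre partition (and, via the induced isomorphism $X/{\sim_f}\cong X$, the values as well), the assignment is injective and the inequality follows.

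The main obstacle is exactly this faithful encoding of the fibre boundaries. In a dense order two adjacent fibres cannot both possess a greatest and a least element, so the cut separating them is typically \emph{unrealised} (a genuine gap of $X$) and cannot be marked by a single point; the encoding must instead record it as a limit of chosen points, and I must check that distinct boundary cuts give rise to distinct embeddings. A secondary subtlety is the non-rigid case: when ${\rm Aut}(X,\le)$ is non-trivial the partition alone does not determine $f$, so $h_f$ must additionally encode the order-isomorphism $X/{\sim_f}\to X$; I expect to handle this by folding a canonical choice of this isomorphism into the same construction. I would finally verify the routine points — that $h_f$ is genuinely order-preserving and injective, and that the bookkeeping of choices (fixed once and for all via a well-ordering of $X$) does not secretly identify distinct epimorphisms.
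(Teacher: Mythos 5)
Your proof of the first assertion is correct and is essentially the paper's argument: fibres of an epimorphism are convex and ordered coherently, so any choice-function section $g$ of $f$ is a non-identity embedding with $fg=\mathrm{id}$.

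For the cardinality inequality, however, there is a genuine gap, and it stems from a misdiagnosis. You assert that the assignment $f\mapsto g_f$ (a chosen section) cannot be injective because ``$g_f$ only records a transversal.'' But $g_f$ is a \emph{function}, not merely its image, and the whole point of the density hypothesis is that this function already determines $f$: one shows that if $f_1g=f_2g=\mathrm{id}$ for an embedding $g$ and dense $X$, then $f_1=f_2$. (Sketch, as in the paper: given $x$, put $x_1=gf_1x$; then $f_1x=f_1x_1=f_2x_1$, and if $f_2x_1<f_2x$ pick $y$ strictly between them by density, so $x_1<gy<gf_2x$; whichever side of $x$ the point $gy$ falls on, applying $f_1$ or $f_2$ contradicts $f_2x_1<y<f_2x$.) Your instinct that a section need not determine the epimorphism is right \emph{without} density --- on $\mathbb{N}$ the section $n\mapsto 2n$ is common to the $2^{\aleph_0}$ epimorphisms $f_A$ of Lemma \ref{2.1} --- but density closes this off directly, with no need to enrich the embedding. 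Your proposed workaround (encoding fibre boundaries and the quotient isomorphism into a modified embedding $h_f$) is only a plan: you yourself flag that adjacent fibres meet along an unrealised cut that ``cannot be marked by a single point'' and that the isomorphism $X/{\sim_f}\cong X$ must somehow be folded in, and neither difficulty is resolved. As written, the second half of the statement is not proved; replacing the enrichment scheme by the uniqueness-of-right-inverse argument above completes it.
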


\begin{proof} Let $f \in {\rm Epi}(X, \le)$ be non-trivial. Define $g$ by $g(x) \in f^{-1}x$, which is possible by the axiom of choice. Thus for all $x$, $fg(x) = x$. Hence if $g$ is the identity, so is 
$f$, and it follows that $g$ is non-trivial. To see that $g$ is an embedding, suppose for a contradiction that $a < b$ but $gb \le ga$. Then $fgb \le fga$ which implies that $b \le a$, which is a 
contradiction. 

We now modify this argument in the dense case, and we write $g$ as $\theta(f)$ to indicate its dependence on $f$, and show that $\theta$ is 1--1. For this we have to show that for epimorphisms $f_1$ and 
$f_2$ and an embedding $g$, $f_1g = f_2g = id \Rightarrow f_1 = f_2$. Take any $x \in X$, and let $x_1 = gf_1x$. Then $f_1x_1 = f_1gf_1x = f_1x$. We show that $f_2x = f_1x$. Note that 
$f_1x = f_1x_1 = f_1gf_1x = f_2gf_1x = f_2x_1$, so we just have to show that $f_2x = f_2x_1$. Suppose not, for a contradiction, and without loss of generality, suppose that $f_2x_1 < f_2x$. Since $X$ is 
dense, there is $y$ such that $f_2x_1  < y < f_2x$. Then $x_1 = gf_1x = gf_2x_1 < gy < gf_2x$. If $gy \le x$ then $y = f_1gy \le f_1x = f_1x_1 = f_2x_1$, contrary to $f_2x_1 < y$. Otherwise, $x < gy$, which 
gives $f_2x \le f_2gy = y$, contrary to $y < f_2x$. \end{proof}

\vspace{.1in}

The first part of this proof shows that any epimorphism $f$ has a right inverse $g$, and this is an embedding. The other direction does not work, that is, not every embedding need have a left inverse, and 
indeed, this is a key point in the proof of Theorem \ref{3.1}. For $\mathbb Q$ for instance, we may define an embedding $g$ by letting $g(x) = x$ if $x < \pi$ and $g(x) = x + 1$ if $x > \pi$. If this had a 
left inverse $f$, then for any $a$ and $b$ such that $a < \pi < b$, $g(a) < 4 < g(b)$, and so $a = fg(a) \le f(4) \le fg(b) = b$, so $f(4)$ must lie above every rational $< \pi$ and below every rational 
$> \pi$, so must equal $\pi$, which is not rational. The best we can do is the following.

\begin{lemma} \label{2.4} If $(X, \le)$ is an order-complete chain then any member $g$ of \newline {\rm Emb}$(X, \le)$ whose image is coterminal has a unique left inverse $f$, which is an epimorphism (and 
without the `coterminality' hypothesis, one can show the existence of $f$ provided that also $X$ has endpoints) . \end{lemma}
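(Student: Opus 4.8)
The plan is to construct $f$ explicitly by suprema and then read off every required property. For each $z \in X$ put $A_z = \{x \in X : g(x) \le z\}$ and define $f(z) = \sup A_z$. I first check this is well defined. Since the image of $g$ is coinitial, some $g(x)$ lies at or below $z$, so $A_z \neq \emptyset$; since the image is cofinal, some $g(x')$ lies above $z$, and as $g$ is an order-embedding, hence strictly increasing, this $x'$ is an upper bound for $A_z$. Thus $A_z$ is a nonempty, bounded-above subset of the order-complete chain $X$, and $f(z) = \sup A_z$ exists in $X$. This is exactly the point at which coterminality is used; the variant with endpoints is handled in the last paragraph.

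Next I would verify that $f$ is an epimorphism and a left inverse of $g$. For the left-inverse property, $f(g(x)) = \sup\{x' : g(x') \le g(x)\} = \sup\{x' : x' \le x\} = x$, where the middle equality again uses that $g$ is strictly increasing. Monotonicity of $f$ is immediate: if $z_1 \le z_2$ then $A_{z_1} \subseteq A_{z_2}$, whence $f(z_1) \le f(z_2)$, so $f$ preserves $\le$. Finally $f$ is surjective, since every $x$ equals $f(g(x))$; together with monotonicity this places $f$ in $\mathrm{Epi}(X, \le)$, as required.

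For uniqueness, let $f'$ be any left inverse of $g$ (necessarily order-preserving, being an endomorphism). Applying $f'$ to the inequality defining $A_z$ gives $x = f'(g(x)) \le f'(z)$ for every $x \in A_z$, so $f'(z) \ge \sup A_z$; symmetrically, writing $B_z = \{x : g(x) \ge z\}$ yields $f'(z) \le \inf B_z$. When $z = g(x_1)$ lies in the image, these bounds force $f'(z) = x_1 = f(z)$ at once. When $z \notin g(X)$, the sets $A_z$ and $B_z$ partition $X$ into a downward-closed piece lying entirely below an upward-closed piece, so $\sup A_z \le \inf B_z$, and the only thing left to exclude is that this is a strict jump. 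I expect this to be the one genuinely subtle point, and the main obstacle: I would argue that in a dense order-complete chain no such jump can occur, since any $u$ strictly between $\sup A_z$ and $\inf B_z$ would lie in neither $A_z$ nor $B_z$, contradicting $A_z \cup B_z = X$. Hence $\sup A_z = \inf B_z$, which pins down $f'(z) = f(z)$ and gives $f' = f$.

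For the parenthetical claim I would drop coterminality but assume $X$ has a least element $0$ and a greatest element $1$. Then $A_z$ is automatically bounded above by $1$, so only nonemptiness can fail; I would set $f(z) = 0$ when $A_z = \emptyset$ (that is, when $z$ lies below all of $g(X)$) and $f(z) = \sup A_z$ otherwise. The same computations show $f$ is an order-preserving surjection with $fg = \mathrm{id}$, delivering the asserted existence of a left inverse. I would not pursue uniqueness in this variant, since the pinning-down argument above drew on coterminality together with the absence of jumps.
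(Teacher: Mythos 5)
Your construction is essentially the paper's. The paper partitions $X \setminus {\rm im}\,g$ into convexity classes $A$, forms the cut $B = g^{-1}(X^{<A})$, $C = g^{-1}(X^{>A})$, and sends all of $A$ to $\sup B$, taking $f = g^{-1}$ on the image; your uniform formula $f(z) = \sup\{x : g(x) \le z\}$ produces exactly the same map, with coterminality and order-completeness used in the same places. The existence half of your argument (well-definedness, $fg = \mathrm{id}$, monotonicity, surjectivity, and the endpoint variant) is complete and correct.

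The one point where the two arguments diverge is the step you yourself single out as the obstacle: showing $\sup A_z = \inf B_z$ for $z \notin {\rm im}\,g$. The paper simply asserts that $\sup B = \inf C$ and that ``clearly there is no other option''; you observe, correctly, that what follows from $A_z \cup B_z = X$ is only that no point lies strictly between $\sup A_z$ and $\inf B_z$, and you then invoke density of $X$ to exclude a jump. Density is not a hypothesis of the lemma, so as written your uniqueness argument proves a slightly different statement --- but your instinct is sound, and the defect lies in the lemma rather than in your proof. Indeed, for $X = {\mathbb Z}$ (which is order-complete) and $g(n) = 2n$ (whose image is coterminal), both $n \mapsto \lfloor n/2 \rfloor$ and $n \mapsto \lceil n/2 \rceil$ are order-preserving surjective left inverses of $g$, so uniqueness genuinely fails at a jump and the paper's assertion $\sup B = \inf C$ is false there. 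Some hypothesis such as density (which does hold in the situations the paper cares about, namely completions of dense subchains of $\mathbb R$) is needed for the uniqueness clause, and your argument supplies exactly the missing step under that hypothesis.
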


\begin{proof} Define $\sim$ on $X \setminus {\rm im} \, g$ by letting $x \sim y$ if there is no point of ${\rm im} \, g$ between $x$ and $y$. The $\sim$-classes are then convex, and as the image of $g$ is 
coterminal, for any $\sim$-class $A$, $B = g^{-1}(X^{< A})$ and $C = g^{-1}(X^{> A})$ are non-empty disjoint with union equal to $X$. Since $B$ is closed downwards and $C$ is closed upwards, they are 
semi-infinite intervals, and as $X$ is order-complete, $z =$ sup $B$ = inf $C$ lies in $X$. We let $f$ map all of $A$ to $z$, and similarly for each $\sim$-class, and otherwise, $f$ is $g^{-1}$. Clearly 
there is no other option, so $f$ is unique.

If we relax the coterminality requirement, then $B$ or $C$ may be empty, in which case the whole of $A$ is sent to the least/greatest point of $X$ respectively.  \end{proof}

We remark that in the proof of Theorem \ref{2.2} there are actually a lot more epimorphisms of $(X, \le)$ provided by the map $f$ and the sequence $a_0 < a_1 < a_2 < \ldots$ than those we have described. 
Let $b_0 < b_1 < b_2 < \ldots$ be a sequence in $[a_0, \sup_{m \in {\mathbb N}}a_m)$ such that for each $n$, $f^{k_n}b_{2n+1} = b_{2n}$ for some integer $k_n \ge 1$. Then we can define a corresponding 
endomorphism $g$ of $X$ thus: $g$ fixes all points of $X$ not in $\bigcup_{n \in {\mathbb N}}[b_n, b_{n+1}]$ and otherwise
$$gx = \left\{ \begin{array}{lcr} b_0             & \mbox{ if } & b_0 \le x \le b_1 \\
                                 f^{k_0}x        & \mbox{ if } & b_1 \le x \le b_2 \\
                                 f^{k_0}b_2      & \mbox{ if } & b_2 \le x \le b_3 \\
                                 f^{k_0+k_1}x    & \mbox{ if } & b_3 \le x \le b_4 \\
                                 f^{k_0+k_1}b_4  & \mbox{ if } & b_4 \le x \le b_5 \\
                                                 &  \ldots     &
 \end{array} \right.$$
(Intuitively, each $[b_{2n}, b_{2n+1}]$ is mapped to a singleton, and other points are mapped by a suitable power of $f$ to give continuity. This will be an epimorphism provided that the supremum of
$\{b_0, f^{k_0}b_2, f^{k_0+k_1}b_4, \ldots\}$ equals that of the $a_n$.)

There is a finite version of the same thing, where $b_0 < b_1 < b_2 < \ldots < b_{2N} = \sup_{n \in {\mathbb N}}a_n$ for which $g$ fixes all points of $X$ not in $\bigcup_{0 \le n < 2N}[b_n, b_{n+1}]$, and 
otherwise 
$$gx = \left\{ \begin{array}{lcl} b_0                             & \mbox{ if } & b_0 \le x \le b_1 \\
                                 f^{k_0}x                         & \mbox{ if } & b_1 \le x \le b_2 \\
                                 f^{k_0}b_2                       & \mbox{ if } & b_2 \le x \le b_3 \\
                                 f^{k_0+k_1}x                     & \mbox{ if } & b_3 \le x \le b_4 \\
                                 f^{k_0+k_1}b_4                   & \mbox{ if } & b_4 \le x \le b_5 \\
                                                                  &   \ldots    &             \\
                          f^{k_0+k_1 + \ldots + k_{N-1}}b_{2N-2}  & \mbox{ if } & b_{2N-2} \le x \le b_{2N-1} \\
                          f^{k_0+k_1 + \ldots + k_{N-1}}x         & \mbox{ if } & b_{2N-1} \le x \le b_{2N}                
 \end{array} \right.$$
In this case, the last piece must be a translation, not a constant, and $g$ is automatically an epimorphism, since $f$ fixes $\sup a_n = b_{2N}$.

Let us say that epimorphisms $g$ of this form are generated from $f$ {\em in a wide sense}. More accurately, they are generated from the action of $f$ on $\bigcup_{n \in {\mathbb N}}[a_n, a_{n+1}]$ together 
with a choice of $b_n$ and $k_n$.

Thus if we are trying to construct a dense rigid subchain $X$ of $\mathbb R$ which admits an epimorphism $f$ of the form 
$$fx = \left\{ \begin{array}{lcr} x               & \mbox{ if } &  x \le 0       \\
                                 0               & \mbox{ if } &  0 \le x \le 1 \\
                                 x-1             & \mbox{ if } &  x \ge 1 
 \end{array} \right.$$
the above discussion shows that we {\em cannot} avoid also admitting all epimorphisms generated from $f$ in a wide sense.

Similar remarks apply to embeddings. Consider a non-trivial embedding $f$, and suppose that $a < f(a)$ for some $a$. Let $b_0 < b_1 < b_2 < \ldots$ be a sequence in $[a, \sup_{m \in {\mathbb N}}f^ma)$, and 
let $h$ be a function on $\omega$ such that $f(b_0) \le h(0) \le f^{k_0}(b_0)$, and $f^{k_i}(b_{i+1}) \le h(i+1) \le f^{k_{i+1}}(b_{i+1})$ for each $i$ where $1 \le k_0 \le k_1 \le k_2 \le \ldots$, and 
$\sup_{i \in {\mathbb N}}f^{k_i}(b_i) = \sup_{i \in {\mathbb N}}b_i$. Then we say that a function $g$ which agrees with $f$ except on $\bigcup_{n \in {\mathbb N}}[b_n, b_{n+1}]$ where it is given by 
$g(x) = f^{k_i}(x)$ if $b_i < x < b_{i+1}$ and $g(b_i) = h(i)$ is {\em generated from $f$ in a wide sense}. Since $f$ is an embedding, it follows easily that so is any such $g$. There are $2^{\aleph_0}$ 
such maps arising from the different choices of sequence $(k_i)$ and values of the function $h$. The same idea may be applied if $a > f(a)$ for some $a$.

Still, this is quite a restricted class of maps, and we want to show that $X$ can be constructed so that once we have decided to include $f$ as an epimorphism or embedding of $X$, there are no epimorphisms 
or embeddings apart from ones generated from it in a wide sense.
 
\begin{lemma}\label{2.5} If ${\mathbb Q} \subseteq X \subseteq {\mathbb R}$ then $|{\rm End}(X)| = 2^{\aleph_0}$. \end{lemma}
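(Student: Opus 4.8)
The plan is to prove the two inequalities $|\mathrm{End}(X)| \ge 2^{\aleph_0}$ and $|\mathrm{End}(X)| \le 2^{\aleph_0}$ separately, the substantive one being the upper bound.

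For the lower bound I would deliberately avoid constant maps, since there are only $|X|$ of these and $X$ may well be countable (e.g.\ $X = \mathbb{Q}$). Instead, for each real $c$ I define the two-valued step map $f_c \colon X \to X$ by $f_c(x) = 0$ if $x < c$ and $f_c(x) = 1$ otherwise. Because $\{0,1\} \subseteq \mathbb{Q} \subseteq X$, each $f_c$ is a genuine order-preserving self-map of $X$. Distinct reals give distinct maps: if $c < c'$, choosing a rational $q$ with $c \le q < c'$ (possible by density of $\mathbb{Q}$) yields $f_c(q) = 1 \ne 0 = f_{c'}(q)$. As there are $2^{\aleph_0}$ reals $c$, this gives $|\mathrm{End}(X)| \ge 2^{\aleph_0}$ at once.

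For the upper bound the guiding idea is that a monotone self-map is \emph{almost} determined by its restriction to the rationals. Given $f \in \mathrm{End}(X)$, there are at most $|\mathbb{R}|^{|\mathbb{Q}|} = (2^{\aleph_0})^{\aleph_0} = 2^{\aleph_0}$ functions $\mathbb{Q} \to \mathbb{R}$, hence at most $2^{\aleph_0}$ possibilities for $f|_{\mathbb{Q}}$. For an irrational $x \in X$ I set $L(x) = \sup\{f(q) : q \in \mathbb{Q},\ q < x\}$ and $R(x) = \inf\{f(q) : q \in \mathbb{Q},\ q > x\}$; since $\mathbb{Q}$ is coterminal in $X$ these are finite reals, and monotonicity of $f$ forces $L(x) \le f(x) \le R(x)$. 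Thus whenever $L(x) = R(x)$ the value $f(x)$ is completely determined by $f|_{\mathbb{Q}}$, and the only genuine freedom lies on the \emph{jump set} $G = \{x \in X \setminus \mathbb{Q} : L(x) < R(x)\}$.

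The crux — and the step I expect to be the main obstacle — is showing that $G$ is countable. I would argue by the standard disjointness principle: if $x_1 < x_2$ both lie in $G$, picking a rational $q$ with $x_1 < q < x_2$ gives $R(x_1) \le f(q) \le L(x_2)$, so the nonempty open intervals $(L(x), R(x))$ for $x \in G$ are pairwise disjoint and hence countable in number. Consequently, once $f|_{\mathbb{Q}}$ is fixed, $f$ is determined by its values on the countable set $G$, and since each such value lies in $X \cap [L(x),R(x)]$ there are at most $|X|^{\aleph_0} \le (2^{\aleph_0})^{\aleph_0} = 2^{\aleph_0}$ further choices. Multiplying gives $|\mathrm{End}(X)| \le 2^{\aleph_0} \cdot 2^{\aleph_0} = 2^{\aleph_0}$; combined with the lower bound this yields the claimed equality, everything apart from the countability of $G$ being routine cardinal arithmetic.
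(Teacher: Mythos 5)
Your proof is correct and follows essentially the same route as the paper's: a monotone self-map of $X$ is determined by its restriction to $\mathbb{Q}$ together with its values on a countable set of jump (discontinuity) points, and the count then comes out to $2^{\aleph_0}$. The only differences are organizational (the paper quantifies over countable candidate discontinuity sets $C$ rather than observing, as you do, that the jump set is already determined by $f|_{\mathbb{Q}}$), plus your explicit step-function lower bound, which the paper leaves implicit.
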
 

\begin{proof} If $f \in {\rm End}(X)$ then $a \in {\mathbb R}$ is a {\em discontinuity} of $f$ if $\lim_{x \to a^-}f(x) < \lim_{x \to a^+}f(x)$. Since $\mathbb Q$ is dense in $\mathbb R$, any $f$ can only 
have countably many discontinuities. For any countable subset $C$ of $\mathbb R$, let 
$$F_C = \{f \in {\rm End}(X): \lim_{x \to a^-}f(x) < \lim_{x \to a^+}f(x) \Rightarrow a \in C\}.$$ 
If $f, g \in F_C$ agree on $(C \cap X) \cup {\mathbb Q}$, then they agree on the whole of $X$. For if $x \in X \setminus (C \cup {\mathbb Q})$, let $x_n$ be a sequence of rationals tending to $x$. 
Then as $f$ and $g$ agree on $(C \cap X) \cup {\mathbb Q}$, $fx_n$ and $gx_n$ are equal, and as $x \not \in C$, $fx = gx$. There are $2^{\aleph_0}$ choices of $C$. Also given a choice of $C$, any 
$f \in F_C$ is continuous at all points of $X \setminus C$, so the restriction of $f$ to $X \setminus C$ is determined by its values on $\mathbb Q$, so there are just $2^{\aleph_0}$ possibilities; also if 
$x \in C$ then $f(x)$ has at most $2^{\aleph_0}$ possible values. Putting these together gives at most $2^{\aleph_0} \times (2^{\aleph_0})^{\aleph_0} = 2^{\aleph_0}$ values in all, and it follows that 
$|{\rm End}(X)| = 2^{\aleph_0}$.  \end{proof} 
 
\begin{lemma}\label{2.6}  If $f: {\mathbb R} \to {\mathbb R}$ is order-preserving with dense image, then it is a continuous epimorphism.  \end{lemma}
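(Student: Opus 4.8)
The plan is to establish two facts about $f$: that it is continuous, and that it is surjective. Together with the standing hypothesis that $f$ is order-preserving, these say exactly that $f$ is a continuous epimorphism of $(\mathbb R, \le)$, so no separate verification of order-preservation is needed.

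For continuity, I would use the fact that an order-preserving map $f: \mathbb R \to \mathbb R$ is monotone non-decreasing, so at every point $a$ the one-sided limits $f(a^-) = \sup_{x < a} f(x)$ and $f(a^+) = \inf_{x > a} f(x)$ exist and satisfy $f(a^-) \le f(a) \le f(a^+)$; the only possible discontinuities are jumps, at which $f(a^-) < f(a^+)$. The key observation is that monotonicity forces every value $f(x)$ with $x < a$ to lie at or below $f(a^-)$, and every value with $x > a$ to lie at or above $f(a^+)$. Consequently the nonempty open interval $\big(f(a^-), f(a^+)\big)$ meets the image of $f$ in at most the single point $f(a)$, and this contradicts the assumption that the image is dense in $\mathbb R$. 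Hence no jump can occur, and $f$ is continuous everywhere.

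For surjectivity, I would fix an arbitrary target $y \in \mathbb R$. Since the image is dense it is in particular unbounded both above and below, so there exist image points $p < y < q$, say $p = f(a)$ and $q = f(b)$. From $f(a) < f(b)$ and the fact that $f$ is order-preserving we deduce $a < b$ (otherwise $a \ge b$ would give $f(a) \ge f(b)$), and the intermediate value theorem applied to the now-continuous $f$ on $[a,b]$ produces some $c$ with $f(c) = y$. Thus $f$ maps onto $\mathbb R$. The one genuinely delicate step is the continuity argument: everything turns on recognizing that a jump of a monotone function carves a true gap out of the range, so that density of the image is incompatible with any discontinuity. Once continuity is in hand, surjectivity is an immediate consequence of density together with the intermediate value theorem.
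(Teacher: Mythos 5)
Your proof is correct, but it is organized differently from the paper's. You establish continuity first, by the classical observation that a monotone map can only have jump discontinuities and that a jump at $a$ leaves a nonempty open subinterval of $\bigl(f(a^-), f(a^+)\bigr)$ entirely outside the image, contradicting density; you then obtain surjectivity as a corollary via the Intermediate Value Theorem. The paper reverses the order and keeps the two parts independent: surjectivity is proved directly from density by a cut argument (if $x$ is omitted, the preimages of $(-\infty,x)$ and $(x,\infty)$ partition $\mathbb{R}$ into an initial and a final segment meeting at some $a$, and density of the image then forces an image point strictly between $f(a)$ and $x$, which is impossible), and continuity is proved by showing that the preimage of any open interval is a convex set that is in fact an open interval. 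Your route is a little shorter once continuity is in hand, since the IVT does the work for surjectivity; the paper's route has the mild advantage that its surjectivity argument does not depend on continuity, and its continuity argument (openness of preimages) is the form that is reused later in the paper, e.g.\ in Lemma \ref{2.7}. One small point of hygiene in your continuity step: an open interval meeting the image in the single point $f(a)$ does not by itself contradict density, but it does contain a nonempty open subinterval missing the image entirely (namely $(f(a^-), f(a))$ or $(f(a), f(a^+))$, whichever is nonempty), and that is the contradiction; it is worth saying this explicitly.
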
 

\begin{proof} First we see that $f$ is surjective. If not, there is $x \in {\mathbb R} \setminus {\rm im} \, f$. Let $A = f^{-1}(-\infty, x)$ and $B = f^{-1}(x, \infty)$. Then as $f$ is order-preserving 
with dense image, $A$ and $B$ are non-empty disjoint with union $\mathbb R$ and $A < B$. Hence for some $a \in {\mathbb R}$, $A = (-\infty, a]$ and $B = (a, \infty)$, or 
$A = (-\infty, a)$ and $B = [a, \infty)$, suppose the former. Thus $f(a) < x$. By density of im $f$, there is $y \in {\rm im} \, f$ such that $f(a) < y < x$. Let $f(b) = y$. Then $b \in A$ so $b \le a$, but 
this implies that $y \le f(a)$, a contradiction.

To see that $f$ is continuous, let $(x, y) \subseteq {\mathbb R}$ be an open interval. Then $f^{-1}(x, y)$ is convex since $z_1 \le t \le z_2$ and $z_1, z_2 \in f^{-1}(x, y)$ implies that 
$x < f(z_1) \le f(t) \le f(z_2) < y$, and hence that $t \in f^{-1}(x, y)$. So $f^{-1}(x, y)$ is an interval. As in the first part of the proof we see that it is an open interval.  \end{proof} 

\begin{lemma}\label{2.7} (i) If $f \in {\rm Epi}(X, \le)$ for dense $X \subseteq {\mathbb R}$, then $f$ is the restriction to $X$ of a unique epimorphism of $\mathbb R$.

(ii) If $f \in {\rm Emb}(X, \le)$ for dense $X \subseteq {\mathbb R}$, then $f$ is the restriction
to $X$ of an embedding of $\mathbb R$ (which need not be unique).  \end{lemma}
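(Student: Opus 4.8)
The plan is to build the extension by a single explicit formula and then read off both statements from it. For $r \in \mathbb{R}$ set
$$\bar{f}(r) = \sup\{f(x) : x \in X,\ x \le r\}.$$
Since $X$ is dense in $\mathbb{R}$, for every $r$ there is a point of $X$ below $r$, so the set is non-empty, and there is $x' \in X$ with $x' > r$, whence $f(x) \le f(x')$ for every $x \in X$ with $x \le r$; thus the supremum is finite and $\bar{f} \colon \mathbb{R} \to \mathbb{R}$ is well defined. It is immediate that $\bar{f}$ is order-preserving, since enlarging $r$ enlarges the set over which the supremum is taken. Moreover $\bar{f}$ extends $f$: for $a \in X$ we have $f(x) \le f(a)$ for all $x \le a$ while $f(a)$ itself occurs in the defining set, so $\bar{f}(a) = f(a)$.

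For part (i), I would note that ${\rm im}\,\bar{f} \supseteq \bar{f}(X) = f(X) = X$ because $f$ is an epimorphism, so $\bar{f}$ is order-preserving with dense image, and Lemma \ref{2.6} then yields at once that $\bar{f}$ is a continuous epimorphism of $\mathbb{R}$ restricting to $f$. For uniqueness, suppose $g$ is any epimorphism of $\mathbb{R}$ with $g|_X = f$; being order-preserving and surjective it has dense image, so it too is continuous by Lemma \ref{2.6}. Two continuous maps that agree on the dense set $X$ agree on all of $\mathbb{R}$, giving $g = \bar{f}$.

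For part (ii), I would use the very same $\bar{f}$ but verify strict monotonicity directly, since an embedding need not have dense image and Lemma \ref{2.6} is unavailable. Given $r_1 < r_2$, density supplies $a, b \in X$ with $r_1 < a < b < r_2$. Every $x \in X$ with $x \le r_1$ satisfies $x < a$, hence $f(x) < f(a)$ because $f$ is an embedding, so $\bar{f}(r_1) \le f(a)$; on the other hand $\bar{f}(r_2) \ge f(b) > f(a)$. Therefore $\bar{f}(r_1) < \bar{f}(r_2)$, so $\bar{f}$ is strictly increasing and hence an embedding of $\mathbb{R}$ extending $f$. Non-uniqueness is then clear: at a real $r \notin X$ whose constructed value lands in a gap of ${\rm im}\,f$, one could equally take $\inf\{f(x) : x \in X,\ x \ge r\}$, which may differ, so the extension is not forced.

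The routine checks, namely finiteness and monotonicity of the supremum, are straightforward from density. The only step needing genuine care is the strictness argument in (ii): a single intermediate point of $X$ does not suffice, because the supremum from the left could still equal $f(a)$, and it is the use of two points $a < b$ that separates the two values. This is the part I would write out most carefully.
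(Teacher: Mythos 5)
Your proof is correct and takes essentially the same approach as the paper: the identical extension formula $F(r)=\sup\{f(x): x\in X,\ x\le r\}$, the same two-point density argument for strictness in (ii), and Lemma \ref{2.6} plus agreement on a dense set for uniqueness in (i). The only (harmless) variation is that you obtain surjectivity in (i) by applying Lemma \ref{2.6} to the dense image $f(X)=X$, where the paper verifies surjectivity of $F$ directly before invoking Lemma \ref{2.6} for continuity.
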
 

\begin{proof} (i) Let $F(x) = \sup\{f(y): y \in X, y \le x\}$. Clearly $F$ extends $f$ and is order-preserving. To see that $F$ is surjective, let $y \in {\mathbb R}$. Let $x = \sup\{z \in X: f(z) \le y\}$. 
By definition of $F$, $F(x) \le y$. If $F(x) < y$ then as $X$ is dense, there is $w \in X$ with $F(x) < w < y$ and as $f$ is surjective on $X$, $w = f(u)$ for some $u \in X$. Thus $f(u) < y$ so $u \le x$ 
which gives $w = f(u) \le F(x)$, a contradiction. For uniqueness, we note by Lemma \ref{2.6} that any epimorphism extending $f$ is continuous, and since any two continuous functions which agree on a dense 
set are equal, $F$ is the unique such extension.

(ii) We use the same definition of the extension as in the first part. To see that this time $F$ is an embedding, let $x < y$. Since $X$ is dense, there are $z, u \in X$ such that $x < z < u < y$. Then 
$F(x) \le f(z) < f(u) \le F(y)$ so $F(x) < F(y)$. The extension doesn't need to be continuous. For instance, if $f(x) = 
\left\{ \begin{array}{lcr} x & \mbox{ if } & x < 0 \\
                                 x + 1           & \mbox{ if } & x > 0  
\end{array} \right.$
where $X = (-\infty, 0) \cup (0, \infty)$, there are infinitely many extensions $F$ to $\mathbb R$, corresponding to the possible values of $F(0)$, which can be any member of $[0, 1]$, none of them 
continuous.  \end{proof} 

\vspace{.1in}

We conclude this section by recalling the construction by Dushnik and Miller \cite{Dushnik}, but modified to obtain an embedding-rigid chain.

\begin{theorem} \label{2.8} There is a dense embedding-rigid subchain $X$ of the real line of cardinality $2^{\aleph_0}$.
\end{theorem}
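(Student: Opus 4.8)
The plan is to adapt the Dushnik--Miller transfinite construction, building $X$ by recursion so as to \emph{kill} every potential non-identity embedding. The first step is to pin down the pool of candidate maps. By Lemma \ref{2.7}(ii), any embedding $g$ of a dense $X \subseteq {\mathbb R}$ is the restriction to $X$ of the increasing map $F(x) = \sup\{g(y) : y \in X,\ y \le x\}$ on ${\mathbb R}$; moreover this $F$ is \emph{strictly} increasing, since for $u < v$ density supplies $X$-points $u < p < q < v$ with $F(u) \le g(p) < g(q) \le F(v)$. So it suffices to arrange that no non-identity strictly increasing $F \colon {\mathbb R} \to {\mathbb R}$ restricts to a self-map of $X$. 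A strictly increasing function has at most countably many discontinuities and is determined by its values on ${\mathbb Q}$ together with its values at those discontinuities, so there are exactly $2^{\aleph_0}$ of them; fix an enumeration $\langle F_\alpha : \alpha < 2^{\aleph_0}\rangle$ of the non-identity ones.

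Next I would run a recursion of length $2^{\aleph_0}$ producing increasing, pairwise disjoint families $(P_\alpha)$ and $(N_\alpha)$ of subsets of ${\mathbb R}$ --- the points committed to lie in $X$, respectively outside $X$ --- with each $|P_\alpha|, |N_\alpha| \le |\alpha| + \aleph_0 < 2^{\aleph_0}$. I would start from $P_0 = {\mathbb Q}$, $N_0 = \emptyset$, so that density and ${\mathbb Q} \subseteq X$ are guaranteed throughout, and take unions at limit stages. At stage $\alpha$ the goal is to defeat $F_\alpha$ by choosing a point $x_\alpha$ with $x_\alpha \in X$ but $F_\alpha(x_\alpha) \notin X$; concretely, put $x_\alpha$ into $P_{\alpha+1}$ and $F_\alpha(x_\alpha)$ into $N_{\alpha+1}$.

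The crux is showing such an $x_\alpha$ always exists, and this is where strict monotonicity is essential. Since $F_\alpha \ne \mathrm{id}$ and $F_\alpha$ is strictly increasing, one of the sets $U = \{x : F_\alpha(x) > x\}$, $\{x : F_\alpha(x) < x\}$ is non-empty and hence contains an interval (if $F_\alpha$ fixed every rational it would fix everything by density); say the former, the other case being symmetric. I need $x_\alpha \in U$ with $x_\alpha \notin \bigcup_{\beta < \alpha} N_\beta$ and $F_\alpha(x_\alpha) \notin \bigcup_{\beta < \alpha} P_\beta$. The first constraint forbids fewer than $2^{\aleph_0}$ points; for the second, injectivity of $F_\alpha$ makes $F_\alpha^{-1}\big(\bigcup_{\beta < \alpha} P_\beta\big)$ also of size $< 2^{\aleph_0}$. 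As $|U| = 2^{\aleph_0}$, a valid $x_\alpha$ survives, and since $x_\alpha \in U$ forces $F_\alpha(x_\alpha) > x_\alpha$, the two commitments are mutually consistent and consistent with the earlier ones, preserving disjointness of $P_{\alpha+1}$ and $N_{\alpha+1}$.

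Finally I would set $X = \bigcup_{\alpha} P_\alpha$. It contains ${\mathbb Q}$, hence is dense, and lies inside ${\mathbb R}$, so $|X| \le 2^{\aleph_0}$; adding one fresh uncommitted real to $P$ at each stage (possible since fewer than $2^{\aleph_0}$ points are ever committed) secures $|X| = 2^{\aleph_0}$. For rigidity, suppose $g$ were a non-identity embedding of $X$; its canonical extension $F$ is strictly increasing and non-identity, so $F = F_\alpha$ for some $\alpha$. Then $F_\alpha(x_\alpha) \in N_\alpha$ is excluded from $X$ although $x_\alpha \in X$, contradicting the fact that $g = F_\alpha|_X$ maps $X$ into $X$. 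The main obstacle is precisely the counting step guaranteeing $x_\alpha$: it is what lets us simultaneously enshrine a witness in $X$ and banish its image, without clashing with the $< 2^{\aleph_0}$ prior commitments, and it genuinely relies on the reduction to \emph{strictly} increasing extensions so that $F_\alpha$ is injective.
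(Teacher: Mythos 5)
Your proposal is correct and takes essentially the same route as the paper: enumerate the $2^{\aleph_0}$ non-identity embeddings of $\mathbb{R}$ (the paper counts them via Lemma \ref{2.5}, you count strictly increasing functions directly), run a transfinite recursion with two growing disjoint sets of reals committed to lie inside or outside $X$, pick the witness $x_\alpha$ from a full-measure-of-cardinality interval on which $F_\alpha$ moves every point, and finish via the extension Lemma \ref{2.7}(ii). The only cosmetic difference is that the paper selects $x_\alpha$ from an explicit interval $I=(x,f_\alpha x)$ satisfying $I\cap f_\alpha I=\emptyset$, whereas you work with the set $U=\{x: F_\alpha(x)>x\}$, which contains such an interval by the same monotonicity observation.
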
   

\begin{proof}  We remark that by Theorem \ref{2.3} this is also epimorphism-rigid.

Let $\kappa = 2^{\aleph_0}$. Enumerate the family of all non-identity embeddings of $\mathbb R$ as $\{f_\alpha: \alpha < \kappa\}$ (using Lemma \ref{2.5}). We choose 
$X_0 \subseteq X_1 \subseteq X_2 \subseteq \ldots$ and $Y_0 \subseteq Y_1 \subseteq Y_2 \subseteq \ldots$ contained in $\mathbb R$ so that for $\alpha < \kappa$, $|X_\alpha|, |Y_\alpha| < \kappa$, and 
$X_\alpha \cap Y_\alpha = \emptyset$. The idea is that the $X_\alpha$ are approximations to $X$, and the $Y_\alpha$ are sets that will definitely be disjoint from $X$. Let $X_0 = {\mathbb Q}$ and 
$Y_0 = \emptyset$. Assuming that $X_\beta, Y_\beta$ have been chosen for $\beta < \alpha$, we shall let $X_\alpha = \bigcup_{\beta < \alpha}X_\beta \cup \{x_\alpha\}$ and 
$Y_\alpha = \bigcup_{\beta < \alpha}Y_\beta \cup \{f_\alpha x_\alpha\}$ for some carefully chosen $x_\alpha$. To make everything work we have to make sure that 
$x_\alpha, f_\alpha x_\alpha \not \in \bigcup_{\beta < \alpha}X_\beta \cup \bigcup_{\beta < \alpha}Y_\beta$ and $x_\alpha \neq f_\alpha x_\alpha$. Since $f_\alpha$ is not the identity, there is $x$ moved by 
$f_\alpha$. If $x < f_\alpha x$, let $I = (x, f_\alpha x)$, and if $x > f_\alpha x$, let $I = (f_\alpha x, x)$. In each case, as $f_\alpha$ is order-preserving, $I \cap f_\alpha I = \emptyset$. Pick any 
$x_\alpha \in I \setminus (\bigcup_{\beta < \alpha} X_\beta \cup \bigcup_{\beta < \alpha} Y_\beta \cup f_\alpha^{-1}(\bigcup_{\beta < \alpha} X_\beta \cup \bigcup_{\beta < \alpha} Y_\beta))$, which is 
possible since $|I| = 2^{\aleph_0} = \kappa$. Then $f_\alpha x_\alpha \in f_\alpha I$ so $f_\alpha x_\alpha \neq x_\alpha$, and $x_\alpha$ is as desired.

Finally we let $X = \bigcup_{\alpha < \kappa} X_\alpha$. Since $X \supseteq {\mathbb Q}$ it is dense. Hence by Lemma \ref{2.7}(ii), any non-identity embedding of $X$ to itself is the restriction to $X$
of a non-identity embedding of $\mathbb R$, which equals $f_\alpha$ for some $\alpha$. But by construction, $f_\alpha$ does not preserve $X$, so this cannot happen.  \end{proof}

\section{Main results on subchains of $\mathbb R$}

We now move on to our main constructions in this case (dense subchains of $\mathbb R$). In all cases, the chain constructed will be automorphism-rigid. In view of the above discussions, we expect to 
describe the following three possible scenarios:

$G = M = S = \{id\}$, and we discuss what $E$ may then be,

$G = S = \{id\}$, $G \neq M$,

$G = \{id\}$, $S \neq \{id\}$. 

These are covered in Theorems \ref{3.3}, \ref{3.1}, and \ref{3.2} respectively. 

\begin{theorem} \label{3.1} There is a dense epimorphism-rigid subchain $X$ of the real line of cardinality $2^{\aleph_0}$ admitting a non-identity embedding $f$ such that every embedding of $X$ is 
generated from $f$ in a wide sense.  \end{theorem}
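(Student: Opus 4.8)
The plan is to imitate the Dushnik--Miller/Droste construction of Theorem \ref{2.8}, but instead of destroying \emph{every} non-identity embedding of $\mathbb R$ I shall spare a single explicit embedding together with all maps generated from it in a wide sense. A convenient choice is the embedding $f$ of $\mathbb R$ given by $f(x)=x$ for $x\le 0$ and $f(x)=x+1$ for $x>0$; this is the right inverse of the plateau epimorphism discussed above, it is a non-surjective order-preserving injection (its image omits $(0,1]$), and for any $a>0$ we have $a<f(a)<f^2(a)<\ldots$, so it has exactly the orbit structure required for wide-sense generation.

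I would build $X=\bigcup_{\alpha<\kappa}X_\alpha$ as an increasing union of approximations, where $\kappa=2^{\aleph_0}$, together with a disjoint family of forbidden sets $Y_\alpha$, exactly as in Theorem \ref{2.8}: $X_0=\mathbb Q$, $Y_0=\emptyset$, with $|X_\alpha|,|Y_\alpha|<\kappa$ and $X_\alpha\cap Y_\alpha=\emptyset$ maintained throughout. Two features are imposed at every stage. First, $f$\emph{-closure}: whenever a point $x$ is put into $X$ its whole forward orbit $\{f^nx:n\in\mathbb N\}$ is also put into $X$, and whenever a point $z$ is put into $Y$ its (finite) backward $f$-orbit is also put into $Y$. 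This guarantees that the final $X$ satisfies $f(X)\subseteq X$, so that $f$ is a genuine non-identity embedding of $X$, while the backward-orbit rule ensures no point demanded in $X$ by $f$-closure can ever have been banished to $Y$. Secondly, I would enumerate as $\{G_\alpha:\alpha<\kappa\}$ the maps to be destroyed, namely all non-identity continuous epimorphisms of $\mathbb R$ together with all embeddings of $\mathbb R$ that are \emph{not} generated from $f$ in a wide sense; there are at most $\kappa$ of these since $|{\rm End}(\mathbb R)|=2^{\aleph_0}$ by Lemma \ref{2.5}. Crucially, neither the identity, nor $f$, nor any other wide-sense map appears on this list.

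At stage $\alpha$, since $G_\alpha$ is a non-identity order-preserving injection there is, as in Theorem \ref{2.8}, an open interval $I$ with $G_\alpha(I)\cap I=\emptyset$, and $|I|=\kappa$. I would choose a witness $x_\alpha\in I$ avoiding the $<\kappa$-many points already committed to $X$ or $Y$, their $f$-orbits, and the $G_\alpha$-preimages of such points, and also avoiding the points $x$ with $G_\alpha(x)=f^n(x)$ for some $n$; then $x_\alpha$ (with its forward orbit) may be added to $X$ and $G_\alpha(x_\alpha)\ne x_\alpha$ (with its backward orbit) to $Y$ without disturbing disjointness or $f$-closure. This records $x_\alpha\in X$ and $G_\alpha(x_\alpha)\notin X$, which destroys $G_\alpha$, since it can no longer map $X$ into $X$.

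Finally I would verify the conclusions for $X=\bigcup_{\alpha<\kappa}X_\alpha$. It is dense of size $\kappa$ since $\mathbb Q\subseteq X\subseteq\mathbb R$, and $f$ is a non-identity embedding by $f$-closure. For epimorphism-rigidity, a non-identity $g\in{\rm Epi}(X,\le)$ extends by Lemma \ref{2.7}(i) and Lemma \ref{2.6} to a unique continuous non-identity epimorphism of $\mathbb R$, which is some $G_\alpha$; but then $g(x_\alpha)=G_\alpha(x_\alpha)\notin X$, contradicting $g\colon X\to X$. For the embedding statement, a non-identity embedding $g$ of $X$ extends by Lemma \ref{2.7}(ii) to an embedding $G$ of $\mathbb R$; if $G$ were not generated from $f$ in a wide sense it would be some $G_\alpha$ and the same contradiction would arise, so $G$, and hence $g=G|_X$, is generated from $f$ in a wide sense. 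The one delicate point --- and the step I expect to be the main obstacle --- is reconciling $f$-closure with destructibility: one must argue that for a genuinely non-wide-sense $G_\alpha$ there is an interval $I$ on which $G_\alpha$ agrees with no single power $f^n$ (otherwise $G_\alpha$ would itself be of wide-sense form), so that the set of $x\in I$ with $G_\alpha(x)=f^n(x)$ for some $n$ is meagre and, by Baire category, leaves a comeagre set of size $\kappa$ from which $x_\alpha$ can be drawn while evading the $<\kappa$ constraints already imposed.
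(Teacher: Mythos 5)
Your overall architecture --- the same embedding $f$, the transfinite construction of $X_\alpha$ and $Y_\alpha$ with $f$-closure of $X$, and an enumeration of the non-identity epimorphisms together with the embeddings not generated from $f$ in a wide sense --- coincides with the paper's, but there are two genuine gaps, both located exactly where the real work has to happen. First, in the epimorphism case you propose to pick $x_\alpha\in I$ avoiding the $G_\alpha$-preimages of the $<\kappa$ points already committed to $X$ or $Y$; but for an epimorphism a single point can have a nondegenerate interval as preimage, and for a devil's-staircase-type epimorphism of $\mathbb R$ whose plateau values are rational (hence already in $X_0=\mathbb Q$) the set $G_\alpha^{-1}(\bigcup_\beta X_\beta)$ contains a dense open subset of every interval, so it cannot be evaded by either a cardinality count or your Baire argument. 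The paper's Case 1 sidesteps this by choosing $y_\alpha$ in the \emph{image} interval first, where the forbidden set $\bigcup_\beta X_\beta\cup f_\alpha(\bigcup_{n,\beta}f^{-n}Y_\beta)$ genuinely has size $<\kappa$, and only then taking $x_\alpha$ to be some $f_\alpha$-preimage of $y_\alpha$; your construction needs the same reversal.

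Second, and more seriously, your treatment of the condition $G_\alpha(x_\alpha)\neq f^n(x_\alpha)$ rests on the claim that for a non-wide-sense $G_\alpha$ there is an interval $I$ on which $G_\alpha$ agrees with no single power of $f$, ``so that'' $\bigcup_n\{x\in I:G_\alpha(x)=f^n(x)\}$ is meagre in $I$. As stated the inference is false: take $G_\alpha(x)=x+1$ and $I=(-1,1)$; then $G_\alpha$ agrees with no single $f^n$ on all of $I$, yet the agreement set contains $(0,1)$. What is true (by monotonicity of $G_\alpha$) is that each $A_n=\{x:G_\alpha(x)=f^n(x)\}$ meets any interval either in a nowhere dense set or in a set containing a subinterval, so the correct dichotomy is: either some interval meets every $A_n$ in a nowhere dense set (and then Baire works), or every interval contains a subinterval on which $G_\alpha$ coincides with some power of $f$. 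Deducing from the latter alternative that $G_\alpha$ is generated from $f$ in a wide sense is precisely the technical heart of the paper's Case 2: one shows that $j(x)=f_\alpha(x)-x$ is non-decreasing (via the $\varepsilon_1,\varepsilon_2$ estimate, which needs only that integer values of $j$ are dense), deduces that the level sets $j^{-1}\{n\}$ are consecutive intervals with dense union, and then checks that the resulting data satisfy every clause of the wide-sense definition (agreement with $f$ off the $[b_i,b_{i+1}]$, non-decreasing exponents, the supremum condition). You correctly identify this as the main obstacle, but the parenthesis ``otherwise $G_\alpha$ would itself be of wide-sense form'' is where the proof has to go, and nothing you have written supplies it.
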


\begin{proof} Let $f$ be the embedding of $\mathbb R$ given by 
$fx = \left\{ \begin{array}{lcr} x               & \mbox{ if } & x \le 0 \\
                                 x + 1           & \mbox{ if } & 0 <  x  
\end{array} \right.$
      
We shall construct a dense epimorphism-rigid subchain $X$ of $\mathbb R$ which is fixed setwise by $f$ (so that the restriction of $f$ to $X$ gives an embedding of $X$). For this we have to destroy many 
potential non-trivial epimorphisms. 

We again choose subsets $X_\alpha$, $Y_\alpha$ of $\mathbb R$ of cardinality $< \kappa$ where $\kappa = 2^{\aleph_0}$ such that $X_0 \subseteq X_1 \subseteq X_2 \subseteq \ldots$, 
$Y_0 \subseteq Y_1 \subseteq Y_2 \subseteq \ldots$, and $X_\alpha \cap Y_\alpha = \emptyset$. In addition we shall ensure that each $X_\alpha$ is closed under $f$. Since $X_0$ is dense, so will be $X$, and 
this means that any epimorphism or embedding of $(X, \le)$ is the restriction of such a map on $\mathbb R$.

We start with $X_0 = {\mathbb Q}$ and $Y_0 = \emptyset$. Clearly $X_0$ is closed under $f$ so all the required conditions hold. Let $\{f_\alpha: \alpha < \kappa\}$ be an enumeration of all the non-identity 
epimorphisms and embeddings of $\mathbb R$ which are not generated from $f$ in a wide sense (using Lemma \ref{2.5}).

Now assume that $X_\beta$ and $Y_\beta$ have been chosen for all $\beta < \alpha$, and we need to choose $X_\alpha$ and $Y_\alpha$. This hinges on the careful choice of a point $x_\alpha \in {\mathbb R}$. 
Given that, we let $X_\alpha = \bigcup_{\beta < \alpha}X_\beta \cup \{f^nx_\alpha: n \ge 0\}$ and $Y_\alpha = \bigcup_{\beta < \alpha}Y_\beta \cup \{f_\alpha x_\alpha\}$. Clearly 
$|X_\alpha|, |Y_\alpha| < 2^{\aleph_0}$ and $X_\beta \subseteq X_\alpha$, $Y_\beta \subseteq Y_\alpha$, for all $\beta < \alpha$. Also $X_\alpha$ is closed under $f$. 

Now we see how $x_\alpha$ has to be chosen. The fact that $\bigcup_{\beta < \alpha}X_\beta \cap \bigcup_{\beta < \alpha}Y_\beta = \emptyset$ follows from the induction hypothesis. To ensure disjointness of 
$X_\alpha$ and $Y_\alpha$ it is required that for each $n \ge 0$ and $\beta < \alpha$, $f^nx_\alpha \not \in Y_\beta$, $f_\alpha x_\alpha \not \in X_\beta$, and $f^nx_\alpha \neq f_\alpha x_\alpha$. 

We split into the following two cases.

\vspace{.1in}

\noindent{\bf Case 1}: $f_\alpha$ is a non-identity epimorphism. 

In the first case, suppose that $f_\alpha x < x$ for some $x$. We choose 
$y_\alpha \in (f_\alpha x, x) \setminus (\bigcup_{\beta < \alpha}X_\beta \cup f_\alpha(\bigcup_{0 \le n, \beta < \alpha}f^{-n}Y_\beta))$. This is possible since 
$|(f_\alpha x, x)| = 2^{\aleph_0}$ and $|(\bigcup_{\beta < \alpha}X_\beta \cup f_\alpha(\bigcup_{0 \le n, \beta < \alpha}f^{-n}Y_\beta))| < 2^{\aleph_0}$. Using the surjectivity of $f_\alpha$ we now 
choose $x_\alpha$ such that $f_\alpha x_\alpha = y_\alpha$. Note that $x_\alpha > x > y_\alpha$, since if $x_\alpha \le x$ then $y_\alpha = f_\alpha x_\alpha \le f_\alpha x$, contrary to the choice of 
$y_\alpha$. The facts that $f^nx_\alpha \not \in Y_\beta$ and $f_\alpha x_\alpha \not \in X_\beta$ for $\beta < \alpha$ follow from $y_\alpha \not \in f_\alpha f^{-n}Y_\beta$, and 
$y_\alpha \not \in X_\beta$. To see that $f^n x_\alpha \neq f_\alpha x_\alpha$ we note that $f_\alpha x_\alpha < x_\alpha$ but $f^n x_\alpha \ge x_\alpha$.

If $x < f_\alpha x$ for some $x < 0$, we choose $y_\alpha \in (x, f_\alpha x) \setminus (\bigcup_{\beta < \alpha}X_\beta \cup f_\alpha(\bigcup_{0 \le n, \beta < \alpha}f^{-n}Y_\beta))$, and $x_\alpha$ such
that $f_\alpha x_\alpha = y_\alpha$. This time, $x_\alpha < x < y_\alpha$, since if $x \le x_\alpha$ then $y_\alpha = f_\alpha x_\alpha \ge f_\alpha x$, contrary to the choice of $y_\alpha$. To see that
$f^nx_\alpha \neq f_\alpha x_\alpha$, note that as $x_\alpha < 0$, $f^nx_\alpha = x_\alpha$.

Next suppose that $f_\alpha$ moves no point to the left, and it fixes all points less than 0. As it is not the identity, it maps some point to the right. We write $j(x) = f_\alpha(x) - x$, which we note is 
continuous, since $f_\alpha$ is. As $f_\alpha x \ge x$ for all $x$, and $f_\alpha$ is not the identity, $j(x) \ge 0$ for all $x$, and $j(x) > 0$ for some $x$. Since $j$ is continuous, and is zero for $x < 0$,
by the Intermediate Value Theorem there is $x$ such that $0 < j(x) < 1$, and by replacing $x$ by inf$\{y: j(y) = j(x)\}$, we may also suppose that $j(y) < j(x)$ for all $y < x$. This time we choose 
$y_\alpha \in (x, f_\alpha x) \setminus (\bigcup_{\beta < \alpha}X_\beta \cup f_\alpha(\bigcup_{0 \le n, \beta < \alpha}f^{-n}Y_\beta))$, which is possible since $f_\alpha x = x + j(x) > x$. By surjectivity
of $f_\alpha$, there is $x_\alpha$ such that $f_\alpha x_\alpha = y_\alpha$. Then $x_\alpha < x < y_\alpha$. As in the previous paragraphs, it remains to check that $f^nx_\alpha \neq f_\alpha x_\alpha$. But
$f_\alpha x_\alpha \neq x_\alpha$, which covers the case $n = 0$, and if $n \ge 0$, $f^n x_\alpha \ge x_\alpha + 1$ whereas $f_\alpha x_\alpha = j(x_\alpha) + x_\alpha < x_\alpha + 1$.

\vspace{.1in}

\noindent{\bf Case 2}: $f_\alpha$ is an embedding not generated from $f$ in a wide sense.

We choose a non-empty open interval $I$, and $x_\alpha$ will be taken from \newline $I \setminus (\bigcup_{0 \le n, \beta < \alpha}f^{-n}Y_\beta \cup \bigcup_{\beta < \alpha}f_\alpha^{-1}X_\beta)$. Then 
$f^nx_\alpha \not \in Y_\beta$ and $f_\alpha x_\alpha \not \in X_\beta$ are immediately satisfied, and we have to concentrate on ensuring the third condition, that $f^nx_\alpha \neq f_\alpha x_\alpha$.

First suppose that $f_\alpha x \neq x$ for some $x < 0$. If $f_\alpha x < x$ we let $I = (f_\alpha x, x)$ and we see that provided $x_\alpha$ is chosen in $I$, $f^n x_\alpha = x_\alpha$ for all $n$, and
hence does not equal $f_\alpha x_\alpha$ which is $< f_\alpha x$. Similarly, if $f_\alpha x > x$, we let $I = (x, {\rm min}(0, f_\alpha x))$ (which is non-empty open), and again 
$f^nx_\alpha \neq f_\alpha x_\alpha$.

We now suppose that $f_\alpha x = x$ for all $x < 0$. In the next case, there is a non-empty open interval $I$ such that $j(I) \cap {\mathbb N} = \emptyset$, where again $j(x) = f_\alpha(x) - x$. Hence if 
we choose $x_\alpha \in I \setminus (\bigcup_{0 \le n, \beta < \alpha}f^{-n}Y_\beta \cup \bigcup_{\beta < \alpha}f_\alpha^{-1}X_\beta)$, then for any $n$ there is $m$ ($= n$ or 0) such that 
$f^nx_\alpha = x_\alpha + m \neq x_\alpha + j(x_\alpha) = f_\alpha x_\alpha$.

If this condition is violated, then $j(I) \cap {\mathbb N} \neq \emptyset$ for {\em every} non-empty open interval $I$, and we shall show that $f_\alpha$ must be generated from $f$ in a wide sense, contrary 
to our enumeration which was not meant to include any such maps. First we show that $a \le b \Rightarrow j(a) \le j(b)$. By cutting up $[a, b]$ into intervals of length $< 1$ and verifying this condition on 
each such interval, it suffices to prove this statement under the assumption that $b - a < 1$. The hypothesis ensures that there are arbitrarily small $\varepsilon_1, \varepsilon_2 > 0$ such that 
$a + \varepsilon_1 < b - \varepsilon_2$ and $j(a + \varepsilon_1), j(b - \varepsilon_2) \in {\mathbb N}$. Then
\begin{eqnarray*}
j(b) - j(a)  & = & (j(b - \varepsilon_2) - j(a + \varepsilon_1)) + (j(b) - j(b - \varepsilon_2)) +(j(a + \varepsilon_1) - j(a))  \\
          & = & \hspace{-.05in} (j(b \hspace{-.04in}- \hspace{-.04in}\varepsilon_2) \hspace{-.04in} - \hspace{-.04in}j(a \hspace{-.04in}+ \hspace{-.04in}\varepsilon_1)) \hspace{-.04in}+ \hspace{-.04in}(f_\alpha(b) \hspace{-.04in}- \hspace{-.04in}b - \hspace{-.04in}f_\alpha(b \hspace{-.04in}- \hspace{-.04in}\varepsilon_2) \hspace{-.04in}+ \hspace{-.04in}b \hspace{-.04in}- \hspace{-.04in}\varepsilon_2)) \hspace{-.04in} + \hspace{-.04in} (f_\alpha(a \hspace{-.04in} + \hspace{-.04in} \varepsilon_1) 
\hspace{-.04in} - \hspace{-.04in} a \hspace{-.04in} - \hspace{-.04in} \varepsilon_1 \hspace{-.04in} - \hspace{-.04in} 
f_\alpha(a) \hspace{-.04in} + \hspace{-.04in} a)   \\
          & \ge & (j(b - \varepsilon_2) - j(a + \varepsilon_1)) - \varepsilon_2 -  \varepsilon_1
\end{eqnarray*}   
since $f_\alpha(b- \varepsilon_2) \le f_\alpha(b)$ and $f_\alpha(a + \varepsilon_1) \ge f_\alpha(a)$,

and 
\begin{eqnarray*}
j(b - \varepsilon_2) - j(a + \varepsilon_1)  & = & (f_\alpha(b - \varepsilon_2)  - b + \varepsilon_2) - (f_\alpha(a + \varepsilon_1) - a - \varepsilon_1)\\
          & \ge & - b + a + \varepsilon_2 + \varepsilon_1 > -1
\end{eqnarray*}   
since $f_\alpha(a + \varepsilon_1) \le f_\alpha(b - \varepsilon_2)$ and $b - a < 1$, showing that $j(b - \varepsilon_2) - j(a + \varepsilon_1) \ge 0$ as these are integers. Hence 
$j(b) - j(a) \ge - \varepsilon_2 - \varepsilon_1$. Since $\varepsilon_1$ and $\varepsilon_2$ are arbitrarily small, 
$j(b) - j(a) \ge 0$.

Let $I_n = j^{-1}\{n\}$. Then $a < b, a, b \in I_n \Rightarrow [a, b] \subseteq I_n$. Hence $I_n$ are intervals. Clearly $I_0 < I_1 < I_2 < \ldots$, and $\bigcup_{n \in {\mathbb N}}I_n$ is dense in 
$\mathbb R$. It is possible that some $I_n$ may be empty. This does not however apply to $I_0$, which contains $X \cap (-\infty, 0)$ since we are assuming that $f_\alpha$ fixes $(-\infty, 0)$ pointwise. Also
some $I_n$ for $n > 0$ is non-empty, because $f_\alpha$ is not the identity. 

First suppose that there are infinitely many non-empty $I_n$, and let us enumerate them as $J_0 < J_1 < J_2 < \ldots$, and we write $b_{n-1}$ and $b_n$ for the endpoints of $J_n$ for $n \ge 1$ (so that
$\sup J_0 = b_0$). We also let $J_{n+1} = I_{k_n}$. Thus for each $n$, $(b_n, b_{n+1}) \subseteq J_{n+1} \subseteq [b_n, b_{n+1}]$. Thus for $x \in J_{n+1}$, $f_\alpha(x) = x + k_n$, and it follows that 
$f_\alpha$ is generated from $f$ in a wide sense, so was not meant to have been listed. If there are only finitely many non-empty $I_n$s, and the greatest one is $J_N = (b_{N-1}, \infty)$ (or 
$[b_{N-1}, \infty)$) we choose an unbounded sequence $b_N, b_{N+1}, b_{N+2}, \ldots$ with $b_{N-1} < b_N < b_{N+1} < \ldots$ and replace $J_N$ by 
$(b_{N-1}, b_N) \cup (b_N, b_{N+1}) \cup (b_{N+1}, b_{N+2}) \cup \ldots$ and we again obtain an expression for $f_\alpha$ generated from $f$ is a wide sense (but this time with eventually constant value of $k_i$).    \end{proof} 

\vspace{.1in}

We now set about obtaining the best result we can for epimorphisms. The above discussion shows that if we try to do this, we {\em cannot} avoid also admitting all epimorphisms generated from $f$ in a wide 
sense (as well as many embeddings). Still, this is quite a restricted class of maps, and we want to show that $X$ can be constructed so that there are no epimorphisms apart from this. 

\vspace{.1in}

\begin{theorem} \label{3.2} There is a dense automorphism-rigid subchain $X$ of the real line of cardinality $2^{\aleph_0}$ and a 
non-trivial epimorphism $f$ of $X$ such that every member of ${\rm Epi}(X)$ is generated from $f$ in a wide sense.   \end{theorem}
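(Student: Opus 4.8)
The plan is to run a transfinite recursion parallel to that of Theorem~\ref{3.1}, but with the \emph{collapsing} epimorphism $f$ given by $fx=x$ for $x\le 0$, $fx=0$ for $0\le x\le 1$ and $fx=x-1$ for $x\ge 1$ in place of the shift used there. Put $\kappa=2^{\aleph_0}$ and build $X=\bigcup_{\alpha<\kappa}X_\alpha$ with $X_0=\mathbb{Q}$, alongside disjoint ``forbidden'' sets $Y_\alpha$, all of size $<\kappa$ and with $X_\alpha\cap Y_\alpha=\emptyset$. The one genuinely new feature, compared with Theorem~\ref{3.1}, is that to make $f$ an \emph{epimorphism} of $X$ rather than just an embedding I must keep $f|_X$ surjective as well as satisfying $f(X)\subseteq X$; for this $f$ that means closing the positive part of $X$ under the whole integer-shift action, so whenever a point $p>0$ enters $X_\alpha$ I also adjoin $\{p+n:n\in\mathbb{Z}\}\cap(0,\infty)$, while points $\le 0$ are fixed by $f$ and are left as singleton orbits. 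A short check shows this secures both closure and the existence of the preimage $y+1$ for each positive $y\in X$. Using Lemma~\ref{2.5} I then enumerate as $\{f_\alpha:\alpha<\kappa\}$ all epimorphisms of $\mathbb{R}$ that are \emph{not} generated from $f$ in a wide sense; these, and $f$ itself, are never listed, so the orbit-closure keeps them available, and the task at stage $\alpha$ is to destroy $f_\alpha$.

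At stage $\alpha$ I pick one new point $x_\alpha$, adjoin its orbit $O(x_\alpha)$ to obtain $X_\alpha$, and put $f_\alpha x_\alpha$ into $Y_\alpha$. Disjointness requires $f_\alpha x_\alpha$ to miss $\bigcup_{\beta<\alpha}X_\beta$ and all of $O(x_\alpha)$, and $O(x_\alpha)$ to miss $\bigcup_{\beta<\alpha}Y_\beta$; as all these sets have cardinality $<\kappa$, the only real condition is $f_\alpha x_\alpha\notin O(x_\alpha)$, which for $x_\alpha>0$ says $f_\alpha x_\alpha-x_\alpha\notin\mathbb{Z}$. I would split into cases as in Theorem~\ref{3.1}. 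If $f_\alpha$ moves some $x<0$, I take $x_\alpha<0$ near $x$; then $O(x_\alpha)=\{x_\alpha\}$ and it is enough that $f_\alpha x_\alpha\neq x_\alpha$ while avoiding the small sets, which is immediate. If instead $f_\alpha$ fixes every $x<0$, I examine $j_\alpha(x)=f_\alpha x-x$, which is continuous since $f_\alpha$ is (Lemma~\ref{2.6}), and try to find an interval $I\subseteq(0,\infty)$ with $j_\alpha(I)\cap\mathbb{Z}=\emptyset$ on which $f_\alpha$ takes a value that may be banished to $Y_\alpha$; a generic $x_\alpha\in I$ then kills $f_\alpha$.

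The heart of the matter, and the step I expect to be the main obstacle, is the dichotomy which says that such an $I$ and such a value can be found \emph{unless} $f_\alpha$ is already generated from $f$ in a wide sense. There are exactly two ways the attempt can be blocked. On a subinterval where $f_\alpha$ is strictly increasing, $f_\alpha x_\alpha$ runs over $\kappa$ distinct values so a banishable one exists; the obstruction is therefore $j_\alpha\in\mathbb{Z}$ throughout, which by continuity forces $j_\alpha$ constant and $f_\alpha$ a pure integer shift there. On a subinterval where $f_\alpha$ is constant, $f_\alpha x_\alpha$ equals a single value $e$, and this resists banishment exactly when $e$ is already committed to $X$, that is, $e\in\mathbb{Q}$ or $e$ lies in the $f$-orbit structure. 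Ruling out these obstructions in every interval, I would then show, using the obstruction hypothesis together with order-preservation (the monotonicity computation of Theorem~\ref{3.1}), that $j_\alpha$ is monotone, so its level sets $I_n=j_\alpha^{-1}\{n\}$ are consecutive intervals with $f_\alpha$ acting as $x\mapsto x+n$ on $I_n$ and each transition between them a genuine collapse whose value is forced; reading off the endpoints of the non-empty levels recovers the wide-sense normal form, contradicting the enumeration. The delicate bookkeeping here is not the $I_n$-analysis itself (which is in fact cleaner than in Theorem~\ref{3.1}, since an epimorphism is continuous and so an integer-valued $j_\alpha$ is automatically constant), but the control of \emph{which collapse values are already forced into} $X$: an obstruction can only arise from collapse values lying in the committed $f$-orbit, and one must verify that this pins $f_\alpha$ down to a wide-sense map.

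Finally $X\supseteq\mathbb{Q}$ is dense and, by construction, $f|_X$ is a non-trivial epimorphism, so $S=\mathrm{Epi}(X)\neq\{\mathrm{id}\}$. Any $g\in\mathrm{Epi}(X)$ extends by Lemma~\ref{2.7}(i) to a unique epimorphism of $\mathbb{R}$; this extension survived the construction, hence is generated from $f$ in a wide sense, and therefore so is $g$. Automorphism-rigidity then comes for free: an automorphism of $X$ lies in $\mathrm{Epi}(X)$, so is wide-sense, but every non-identity wide-sense map has at least one genuine collapse (order-preservation forces the transitions between its integer levels to be non-degenerate collapses), and a dense $X$ meets such an interval in more than one point, so the map cannot be injective. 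Hence $G=\mathrm{Aut}(X)=\{\mathrm{id}\}$, as required.
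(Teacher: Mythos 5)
Your overall strategy is the same as the paper's: add the collapsing map $f$, close $X$ under the $\mathbb Z$-orbit of each new point, enumerate the epimorphisms of $\mathbb R$ not generated from $f$ in a wide sense, and at stage $\alpha$ kill $f_\alpha$ by sending $f_\alpha x_\alpha$ to the forbidden set, with the residual case analysis showing that any $f_\alpha$ that cannot be killed must have $j_\alpha=f_\alpha-\mathrm{id}$ monotone with integer level sets and is therefore of wide-sense form. The final paragraph (density, Lemma \ref{2.7}(i), and automorphism-rigidity via the fact that every non-identity wide-sense map collapses a non-trivial interval) is fine.

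There is, however, one step that fails as written: the case ``$f_\alpha$ moves some $x<0$, so take $x_\alpha<0$ near $x$; \ldots which is immediate.'' The condition you must meet is not only $f_\alpha x_\alpha\neq x_\alpha$ but also $f_\alpha x_\alpha\notin\bigcup_{\beta<\alpha}X_\beta$, and your cardinality argument for this tacitly assumes that $f_\alpha x_\alpha$ ranges over $2^{\aleph_0}$ values as $x_\alpha$ varies near $x$. That is false when $f_\alpha$ is locally constant there. Concretely, let $f_\alpha$ be the identity on $(-\infty,-1]$, constantly $-1$ on $[-1,0]$, and $x\mapsto x-1$ on $[0,\infty)$: this is a continuous epimorphism of $\mathbb R$, it is not generated from $f$ in a wide sense (wide-sense maps fix $(-\infty,0)$ pointwise), and it moves every $x\in(-1,0)$; yet for every $x_\alpha<0$ either $f_\alpha x_\alpha=x_\alpha$ or $f_\alpha x_\alpha=-1\in\mathbb Q=X_0$, so no choice of $x_\alpha<0$ ever puts $f_\alpha x_\alpha$ outside $X$. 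The repair is the one the paper uses: exploit surjectivity and continuity of $f_\alpha$ to first choose $y_\alpha$ inside the open ``gap'' interval between $x$ and $f_\alpha x$ (which has cardinality $2^{\aleph_0}$ and is the image of a suitable open interval $I$), then set $x_\alpha\in I$ with $f_\alpha x_\alpha=y_\alpha$ --- accepting that $x_\alpha$ may be positive, in which case one checks $f^nx_\alpha\ge 0>y_\alpha$ separately rather than relying on $O(x_\alpha)=\{x_\alpha\}$. A second, smaller point: in the residual case you should get that $j_\alpha$ is \emph{decreasing} with non-positive integer values (forced by $j_\alpha=0$ on $(-\infty,0)$), not merely monotone; the sign matters, since a constant positive integer shift would also evade your ``$j_\alpha\notin\mathbb Z$'' test but is not wide-sense, and it is only excluded because decreasingness plus $j_\alpha\equiv 0$ on the negatives rules it out.
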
 

\begin{proof} We follow a similar method to Theorem \ref{3.1}, this time adding an epimorphism $f$ given by 
$$fx = \left\{ \begin{array}{lcr} x               & \mbox{ if } & x \le 0 \\
                                 0               & \mbox{ if } & 0 \le x \le 1 \\
                                 x - 1           & \mbox{ if } & 1 <  x  
\end{array} \right.$$
and aim to construct a dense embedding-rigid subchain $X$ of $\mathbb R$ closed under $f$. The previous discussion shows that not only is each $f^n$ an epimorphism of $X$, but so is each map of the form
$\varphi$ where for some sequence $0 \le b_0 < b_1 < b_2 < \ldots$ and positive integers $k_i$, $\varphi$ is generated from $f$ in a `wide sense' as defined above. We wish to arrange things so that $X$ has 
no epimorphisms apart from these. 

Let $\kappa = 2^{\aleph_0}$ and let $\{f_\alpha: \alpha < \kappa\}$ be an enumeration of all the epimorphisms of $\mathbb R$ which are not generated from $f$ in a wide sense. As before we choose two
sequences $X_\alpha$ and $Y_\alpha$ contained in $\mathbb R$ for $\alpha < \kappa$. We start with $X_0 = {\mathbb Q}$ and $Y_0 = \emptyset$. 

Assume inductively that $X_\beta$ and $Y_\beta$ have been chosen for $\beta < \alpha$ where $\alpha < \kappa$, and that $X_\beta \cap Y_\beta = \emptyset$, $|X_\beta|, |Y_\beta| < \kappa$, and 
$fX_\beta = X_\beta$. We aim to choose a point $x_\alpha \neq 0$ such that if we let $y_\alpha = f_\alpha x_\alpha$, $X_\alpha = \bigcup_{\beta < \alpha} X_\beta \cup \{f^n x_\alpha: n \in {\mathbb Z}\}$,
and $Y_\alpha = \bigcup_{\beta < \alpha} Y_\beta \cup \{y_\alpha\}$, then $X_\alpha \cap Y_\alpha = \emptyset$. Note that $f^n x_\alpha$ is a single point (even if $n$ is negative), which lies in 
$(x_\alpha + {\mathbb Z}) \cup \{0\}$, since $f$ is 1--1 except at 0. Also $f$ maps $X_\alpha$ onto itself, and since $x_\alpha \in X_\alpha$, and $f_\alpha x_\alpha = y_\alpha$ will not lie in $X$, it will 
follow that $f_\alpha \not \in {\rm Epi}(X, <)$. As usual we have to ensure that 
$x_\alpha \not \in f_\alpha^{-1} \bigcup_{\beta < \alpha}X_\beta \cup \bigcup_{n \in {\mathbb Z}, \beta < \alpha}f^{-n}Y_\beta$ and $f^nx_\alpha \neq f_\alpha x_\alpha$. 

First suppose that there is a non-empty open interval $I$ whose image $J$ under $f_\alpha$ is also open, and such that for all $x \in I$, $f_\alpha x - x \not \in {\mathbb Z}$. We pick 
$y_\alpha \in J \setminus (\bigcup_{\beta < \alpha}X_\beta \cup f_\alpha(\bigcup_{\beta < \alpha}X_\beta \cup \bigcup_{n \in {\mathbb Z}, \beta < \alpha}f^{-n}Y_\beta))$ (noting that as $0 \not \in Y_\beta$,
$|f^{-n}Y_\beta| = |Y_\beta|$, which guarantees that the set we are choosing from has cardinality $\kappa$), and $x_\alpha \in I$ such that $f_\alpha x_\alpha = y_\alpha$. By choice, 
$x_\alpha \not \in f_\alpha^{-1} \bigcup_{\beta < \alpha}X_\beta \cup \bigcup_{n \in {\mathbb Z}, \beta < \alpha} f^{-n}Y_\beta$, so it remains to check that $f^nx_\alpha \neq f_\alpha x_\alpha$. 
Since $x_\alpha \not \in X_0$, $x_\alpha \neq 0$. If $x_\alpha < 0$ then $f^nx_\alpha = x_\alpha$, which cannot equal $f_\alpha x_\alpha$ since $f_\alpha x_\alpha - x_\alpha \not \in {\mathbb Z}$, so we now 
suppose that $x_\alpha > 0$. If $f^nx_\alpha = 0$, then it certainly cannot equal $f_\alpha x_\alpha = y_\alpha$, as this is chosen outside $X_0$. Otherwise, $f^nx_\alpha = x_\alpha - n$ (whether $n$ is
positive or negative), and so $f^nx_\alpha - x_\alpha \in {\mathbb Z}$, and as $f_\alpha x_\alpha - x_\alpha \not \in {\mathbb Z}$ we again deduce that $f^nx_\alpha \neq f_\alpha x$.

Next suppose that $f_\alpha$ moves some $x < 0$. Since $f_\alpha$ is surjective, there is $y$ such that $f_\alpha y = x$ and clearly if $x < f_\alpha x$ then $y < x$, and if $x > f_\alpha x$ then 
$y > x$. Let $J = (x, f_\alpha x)$ (or $(f_\alpha x, x)$ respectively). In the first case, by continuity of $f_\alpha$, the supremum of the set of points which $f_\alpha$ maps to $x$ is also mapped to $x$,
and similarly the infimum of the set of points which $f_\alpha$ maps to $f_\alpha x$ is also mapped to $f_\alpha x$. So by passing to a suitable open subinterval $I$ of $(y, x)$, we may assume that 
$f_\alpha(I) = J$. Note that $I \cap J = \emptyset$. We choose $y_\alpha \in J$ and then $x_\alpha \in I$ as before such that $f_\alpha x_\alpha = y_\alpha$, and we just have to check that 
$f^n x_\alpha \neq f_\alpha x_\alpha$. This time definitely $x_\alpha < 0$, and so $f^n x_\alpha = x_\alpha$, and this cannot equal $f_\alpha x_\alpha$ by assumption. In the second case, where 
$J = (f_\alpha x, x)$, we similarly find a suitable subinterval $I$ of $(x, y)$ which $f_\alpha$ maps to $J$, and again choose $y_\alpha \in J$ and $x_\alpha$ mapping to it under $f_\alpha$. To check that 
$f^n x_\alpha \neq f_\alpha x_\alpha$, the same argument applies if $x_\alpha < 0$. If however $x_\alpha > 0$ then also $f^n x_\alpha \ge 0$, which cannot equal $y_\alpha$ which is $< x$, hence $< 0$.

Now suppose that intervals $I$ and $J$ as in the first case do not exist, and also that $f_\alpha$ fixes $(-\infty, 0)$ pointwise. First we show that $j$ defined by $j(x) = f_\alpha x - x$ is decreasing, 
that is, $a \le b \Rightarrow j(a) \ge j(b)$. If not, there are $a < b$ such that $j(a) < j(b)$. By the Intermediate Value Theorem, $j$ takes all values in $[j(a), j(b)]$ on $[a, b]$, so by increasing $a$ 
and decreasing $b$ as necessary, we may suppose that for $a \le x \le b$, $j(x) \not \in {\mathbb Z}$. By replacing $a$ and $b$ by sup$\{x \in [a, b]: j(a) = j(x)\}$ and inf$\{x \in [a, b]: j(x) = j(b)\}$ 
respectively, we may also suppose that $j$ maps $I = (a, b)$ onto $(j(a), j(b))$. It follows (using the continuity of $f_\alpha$) that $f_\alpha$ maps $I$ onto $J = (f_\alpha(a), f_\alpha(b))$, and since 
for $x \in I$, $f_\alpha x - x \not \in {\mathbb Z}$, we have $I$ and $J$ as desired. Since we are now assuming that such $I$ and $J$ do not exist, we conclude that $j$ is decreasing.

Now consider any $a < b$ for which $f_\alpha(a) < f_\alpha(b)$, and as in the previous paragraph, by passing to a subinterval we may suppose that $f_\alpha(a, b) = (f_\alpha(a), f_\alpha(b))$. Unless $j$ is
constant on $(a, b)$ with an integer value, we can again pass to a suitable subinterval and find $I$ and $J$ as desired. Since we are assuming that they do not exist, it follows that for any $a < b$ such
that $f_\alpha(a) < f_\alpha(b)$ and $f_\alpha(a, b) = (f_\alpha(a), f_\alpha(b))$, $j$ is constant on $(a, b)$ with integer value, so that for some fixed $n \in {\mathbb Z}$, $f_\alpha(x) = x + n$ for
all $x \in (a, b)$.  

Now consider $j^{-1}\{n\}$ for various (integer) values of $n$. Since $j$ is decreasing, $m < n \Rightarrow j^{-1}\{m\} > j^{-1}\{n\}$. We only list non-empty sets of this form, and as we have just shown 
that $j^{-1}\{0\} \supseteq (-\infty, 0)$, it follows that $j^{-1}\{0\} \neq \emptyset$, and $j^{-1}\{n\} \neq \emptyset \Rightarrow n \le 0$. Listing the non-empty sets of the form $j^{-1}\{n\}$ in
increasing order as $J_n$, there is a strictly increasing sequence of integers $(l_n)$ with $l_0 = 0$ and such that $J_0 < J_1 < J_2 < \ldots$ and $J_n = j^{-1}\{-l_n\}$. Write $J_0 = (-\infty, a_0)$, and
$J_n = (a_{2n-1}, a_{2n})$ for $n > 0$. Thus for $x < a_0$, $f_\alpha x = x$ and for $a_{2n-1} < x < a_{2n}$, $f_\alpha x = x - l_n$. Now $f_\alpha$ is increasing, and continuous, so it follows that 
$a_{2n} < a_{2n + 1}$, and also the above discussion shows that $f_\alpha$ is constant on each interval of the form $(a_{2n}, a_{2n+1})$. By continuity, it is actually constant on the {\em closed} interval
$[a_{2n}, a_{2n+1}]$, and also $f_\alpha x = x - l_n$ for all $x \in [a_{2n-1}, a_{2n}]$, so it follows that 
$$f_\alpha x = \left\{ \begin{array}{lcr} x             & \mbox{ if } & x \le a_0 \\
                                      x - l_n           & \mbox{ if } & a_{2n-1} \le x \le a_{2n} \\
                                     a_{2n} - l_n       & \mbox{ if } & a_{2n} \le x \le a_{2n+1}  
\end{array} \right.$$
and by continuity at $a_{2n+1}$, $a_{2n} - l_n = a_{2n+1} - l_{n+1}$. We can now see that $f_\alpha$ is generated from $f$ in a wide sense. We consider two cases.

In the first case, $J_n$ exists for all $n \ge 0$. Then $f_\alpha$ fixes all points not in $\bigcup_{n \in {\mathbb N}}[a_n, a_{n+1}]$, and otherwise,
$$f_\alpha x = \left\{ \begin{array}{lcr} a_0                 & \mbox{ if } & a_0 \le x \le a_1 \\
                                      x - l_1 = f^{l_1}x      & \mbox{ if } & a_1 \le x \le a_2 \\
                                      a_2 - l_1 = f^{l_1}a_2  & \mbox{ if } & a_2 \le x \le a_3 \\
                                      x - l_2 = f^{l_2}x      & \mbox{ if } & a_3 \le x \le a_4 \\
                                      a_4 - l_2 = f^{l_2}a_4  & \mbox{ if } & a_4 \le x \le a_5 \\
                                      x - l_3 = f^{l_3}x      & \mbox{ if } & a_5 \le x \le a_6 \\
                                                              & \ldots      & 
\end{array} \right.$$
and the expression in the desired form follows on letting $k_0 = l_1$ and $k_n = l_{n+1} - l_n$ for $n > 0$ (and we note that $f^{k_n}a_{2n+1} = a_{2n+1} - k_n = a_{2n} - l_n + l_{n+1} - k_n = a_{2n}$).

In the second case, $J_n$ exists just for $0 \le n \le N$, and a similar calculation applies.
\end{proof}

\vspace{.1in}

For the next result we observe that for any dense subchain $X$ of $\mathbb R$, there are certain kinds of endomorphism which we cannot avoid. For $f \in {\rm End}(X, \le)$, let 
$C_f = \{x \in X: \exists a,b(a < x < b \wedge f \mbox{ is constant on } [a, x] \mbox{ or } [x, b])\}$, and $I_f = \{x: f(x) = x\}$. We say that $f$ is {\em locally identity-constant} if $C_f \cup I_f$ 
is dense in $X$. A typical locally identity-constant endomorphism may be constructed as follows. Let $\mathcal{C}$ be a (necessarily countable) family of non-trivial pairwise disjoint intervals (open, 
closed, or semi-open), and for each $C \in {\mathcal C}$ let $a_C \in C$. Let $I$ be any subset of $X \setminus \bigcup {\mathcal C}$ such that $\bigcup{\mathcal C} \cup I$ is dense in $X$. Then we define 
$f$ by letting $f(x) = a_C$ if $x \in C \in {\mathcal C}$, and $f(x) = x$ if $x \in I$. It is easy to verify that this is so far order-preserving. For instance, if $x \le y$ and 
$x \in C_1 \in {\mathcal C}$, $y \in C_2 \in {\mathcal C}$, then $C_1 \le C_2$, so $a_{C_1} \le a_{C_2}$; and if $x \in C \in {\mathcal C}$, $y \in I$ then $C < y$, so $f(x) = a_C < y = f(y)$. We still have 
to define $f(x)$ for $x \not \in \bigcup {\mathcal C} \cup I$. Note that for all $y < x$ lying in $\bigcup{\mathcal C} \cup I$, $f(y) \le x$, and for all $y > x$ lying in $\bigcup{\mathcal C} \cup I$, 
$f(y) \ge x$, which shows that $[\sup \{f(y): y < x, y \in \bigcup{\mathcal C} \cup I\}, \inf \{f(z): x < z, z \in \bigcup{\mathcal C} \cup I\}] \cap X \neq \emptyset$. So we let $f(x)$ be any member of 
$[\sup \{f(y): y < x, y \in \bigcup{\mathcal C} \cup I\}, \inf \{f(z): x < z, z \in \bigcup{\mathcal C} \cup I\}] \cap X$.

\begin{theorem} \label{3.3} There is a dense subchain $X$ of the real line of cardinality $2^{\aleph_0}$ with trivial embedding and epimorphism monoids, and such that every endomorphism of $X$ is 
locally identity-constant.   \end{theorem}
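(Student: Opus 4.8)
The plan is to adapt the transfinite-construction technique from Theorem \ref{2.8} and Theorem \ref{3.1}, but this time I must destroy \emph{all} non-trivial embeddings and epimorphisms of $\mathbb R$ (restricted to $X$), \emph{except} that I am not allowed to — and need not — kill the locally identity-constant endomorphisms, since by the discussion preceding the statement these are unavoidable for any dense $X$. By Theorem \ref{2.3}, epimorphism-rigidity will follow once I have embedding-rigidity, so the core task is to build a dense $X \subseteq \mathbb R$ with $\mathbb Q \subseteq X$ that admits no non-identity embedding of $\mathbb R$ preserving $X$; the claim about endomorphisms then requires a separate structural argument rather than a killing argument.

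First I would set $\kappa = 2^{\aleph_0}$ and enumerate all non-identity embeddings of $\mathbb R$ as $\{f_\alpha : \alpha < \kappa\}$, using Lemma \ref{2.5} (which bounds $|{\rm End}(X)|$, hence the number of embeddings of $\mathbb R$, by $2^{\aleph_0}$). I would then build increasing chains $X_\alpha$ and $Y_\alpha$ of subsets of $\mathbb R$ of size $<\kappa$, with $X_\alpha \cap Y_\alpha = \emptyset$, starting from $X_0 = \mathbb Q$, $Y_0 = \emptyset$, exactly as in Theorem \ref{2.8}. At stage $\alpha$, since $f_\alpha$ is a non-identity order-preserving map of $\mathbb R$, it moves some point $x$, and (taking $I = (x, f_\alpha x)$ or $(f_\alpha x, x)$) the order-preserving property gives $I \cap f_\alpha I = \emptyset$; I choose $x_\alpha \in I$ avoiding $\bigcup_{\beta<\alpha} X_\beta \cup \bigcup_{\beta<\alpha} Y_\beta \cup f_\alpha^{-1}(\bigcup_{\beta<\alpha} X_\beta \cup \bigcup_{\beta<\alpha} Y_\beta)$, which is possible as $|I| = \kappa$. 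Then put $x_\alpha \in X_\alpha$ and $f_\alpha x_\alpha \in Y_\alpha$. Setting $X = \bigcup_{\alpha<\kappa} X_\alpha$, density follows from $\mathbb Q \subseteq X$, and Lemma \ref{2.7}(ii) guarantees that any non-identity embedding of $X$ extends to one of $\mathbb R$, i.e. equals some $f_\alpha$; but $x_\alpha \in X$ while $f_\alpha x_\alpha \in Y$ is barred from $X$, so $f_\alpha$ fails to preserve $X$. This gives embedding-rigidity, and epimorphism-rigidity is then immediate from Theorem \ref{2.3}.

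The genuinely new part, and what I expect to be \textbf{the main obstacle}, is proving that every endomorphism of the chain so constructed is locally identity-constant. Unlike embeddings and epimorphisms, endomorphisms need be neither injective nor surjective and so cannot simply be destroyed by the point-killing scheme; moreover there are $2^{\aleph_0}$ of them (Lemma \ref{2.5}), so the transfinite list cannot enumerate and eliminate them all. Instead I would argue structurally, analysing an arbitrary $f \in {\rm End}(X)$ via its extension to $\mathbb R$. The key idea is to examine, for each point $x$, the behaviour of $f$ on a neighbourhood: since $f$ is order-preserving with only countably many discontinuities (as in Lemma \ref{2.5}), on each maximal open interval of continuity $f$ is either locally constant or locally injective. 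On a sub-interval where $f$ is injective and non-constant, the restriction of $f$ behaves like an embedding of a subchain into $X$; I would aim to show that embedding-rigidity forces $f$ to be the identity there, so that such $x$ lie in $I_f$, whereas the locally-constant behaviour contributes points of $C_f$. The delicate step is handling the transition points and the non-surjective ``gaps'': I must show that the set of $x$ at which $f$ is either locally identity or locally constant is dense, which amounts to ruling out, on every subinterval, the possibility of $f$ acting as a non-trivial order-isomorphism onto a shifted copy of part of $X$.

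Concretely, I would suppose for contradiction that $C_f \cup I_f$ is \emph{not} dense, so there is a non-empty open interval $J$ with $(C_f \cup I_f) \cap J = \emptyset$; on $J \cap X$ the map $f$ is then nowhere locally constant and nowhere the identity, hence (after shrinking $J$ to a continuity interval) $f$ restricts to a strictly increasing continuous injection with $f(x) \neq x$ throughout. This restricted map is an embedding of the dense chain $J \cap X$ into $X$ that moves every point, and I would derive a contradiction with the construction by showing that such a map extends to a non-identity embedding of $\mathbb R$ agreeing with some $f_\alpha$ on a neighbourhood — whose witnessing pair $x_\alpha, f_\alpha x_\alpha$ was separated into $X$ and $Y$ at stage $\alpha$, contradicting that both $x_\alpha$ and $f_\alpha(x_\alpha)$ would have to lie in $X$. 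Making this extension-and-matching argument precise, so that a merely \emph{local} non-trivial embedding is seen to conflict with the \emph{global} killing of $f_\alpha$, is the crux; I would handle it by noting that it suffices to find a single point $x \in J \cap X$ with $f(x) \neq x$ and $f(x) \in X$, and then locate this configuration among the $f_\alpha$ that were excluded during the construction.
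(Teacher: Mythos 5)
Your first half (embedding-rigidity via the Theorem \ref{2.8} killing argument, then epimorphism-rigidity from Theorem \ref{2.3}) matches the paper. The gap is in the endomorphism part, and it stems from a miscount: you assert that since there are $2^{\aleph_0}$ endomorphisms "the transfinite list cannot enumerate and eliminate them all." But the construction runs for $\kappa = 2^{\aleph_0}$ stages, so it \emph{can} enumerate all $2^{\aleph_0}$ endomorphisms of $\mathbb R$ (Lemma \ref{2.5}); the genuine obstruction is only that the locally identity-constant ones are unavoidable and so must be left off the list. This is exactly what the paper does: it enumerates all endomorphisms of $\mathbb R$ that are \emph{not} locally identity-constant and kills each one by the usual $x_\alpha, y_\alpha$ device, the only new point being to check that any such $f_\alpha$ admits a non-empty open interval $I$ whose image $J=f_\alpha I$ has cardinality $2^{\aleph_0}$ and is disjoint from $I$ (which follows because off $C_{f_\alpha}\cup I_{f_\alpha}$ the map is strictly increasing and moves some point). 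Non-injectivity and non-surjectivity are not obstacles to the point-killing scheme once such $I$ and $J$ are found.

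The replacement "structural" argument you offer does not close the resulting hole. If $C_f\cup I_f$ misses an interval $J$, you do get that $f$ is a strictly increasing, everywhere-moving injection on $J\cap X$, but you then need to contradict the construction, and your proposed route fails at both ends. First, the construction of Theorem \ref{2.8} kills a global embedding $f_\alpha$ of $\mathbb R$ by producing a \emph{single} witness $x_\alpha$ with $x_\alpha\in X$, $f_\alpha x_\alpha\notin X$; if you extend $f\restriction (J\cap X)$ to an embedding $g$ of $\mathbb R$ and match it with some $f_\alpha$, the witness $x_\alpha$ may well lie outside $J$, where $g$ and $f$ have nothing to do with each other, so no contradiction results. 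Second, your fallback — "it suffices to find a single point $x\in J\cap X$ with $f(x)\neq x$ and $f(x)\in X$" — is vacuous: $f$ is an endomorphism of $X$, so $f(x)\in X$ for every $x\in X$ automatically, and this contradicts nothing. To repair the proof you should abandon the reduction to embedding-rigidity and instead add the non-locally-identity-constant endomorphisms of $\mathbb R$ to the enumeration, as the paper does.
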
 

\begin{proof} We use the model already described in Theorem \ref{2.8}, but modified if necessary by excluding yet more functions. We showed there that the chain constructed is embedding-rigid, and hence by 
Theorem \ref{2.3} is also epimorphism-rigid. It remains to analyze what the endomorphisms $f$ can be. Using Lemma \ref{2.5} we may enumerate all endomorphisms of $\mathbb R$ that are not locally 
identity-constant. In our standard construction of sets $X_\alpha$ and $Y_\alpha$ we need to show how we can choose $x_\alpha$ in such a way that defining $X_\alpha$ to be 
$\bigcup_{\beta < \alpha}X_\beta \cup \{x_\alpha\}$ and $Y_\alpha$ to be $\bigcup_{\beta < \alpha}Y_\beta \cup \{f_\alpha x_\alpha\}$, $X_\alpha \cap Y_\alpha = \emptyset$. As seen in the proofs of Theorems 
\ref{3.1}, \ref{3.2} if there is a non-empty open interval $I$ with image $J$ under $f_\alpha$, such that $J$ also has cardinality $2^{\aleph_0}$ and is disjoint from $I$, then there is a suitable choice of 
$x_\alpha$ and $y_\alpha$. Namely we would choose $y_\alpha \in J \setminus (\bigcup_{\beta < \alpha}X_\beta \cup f_\alpha \bigcup_{\beta < \alpha}Y_\beta)$, and $x_\alpha \in I$ such that 
$f_\alpha x_\alpha = y_\alpha$, and this is sufficient to guarantee that $x_\alpha \not \in \bigcup_{\beta < \alpha}Y_\beta$ and $f_\alpha x_\alpha \not \in \bigcup_{\beta < \alpha}X_\beta$, and 
$f_\alpha x_\alpha \neq x_\alpha$. So it remains to show that for any not locally identity-constant $f_\alpha$, such $I$ and $J$ exist.

For this, observe that as $f_\alpha$ is not locally identity-constant, there are $x < y$ in $\mathbb R$ such that no point of $(x, y)$ lies in $C_{f_\alpha} \cup I_{f_\alpha}$. Thus $f_\alpha$ is strictly 
increasing, hence 1--1, on $(x, y)$. Also, as $f_\alpha$ is not the identity on $(x, y)$ there is $z \in (x, y)$ such that $f_\alpha z \neq z$. If $z < f_\alpha z$ then 
$(z, f_\alpha z) \cap (f_\alpha z, f_\alpha^2 z) = \emptyset$ (and if $f_\alpha z < z$ then $(f_\alpha z, z) \cap (f_\alpha^2 z, f_\alpha z) = \emptyset$). Since $f_\alpha$ is 1--1 on $(x, y)$ we may let 
$I = (z, \min(y, f_\alpha z))$ (or $(\max(x, f_\alpha z), z)$ in the second case), and $J = f_\alpha I$.
\end{proof}

We remark that all locally identity-constant endomorphisms $f$ constructed as above have the property that $f^3 = f^2$ (they would be idempotent, $f^2 = f$, apart from `exceptional' values which may be 
assigned in the final clause, at points of $X \setminus \bigcup{\mathcal C} \cup I$). However, a modification obtained by allowing $C \in {\mathcal C}$ to be cut into infinitely many pieces, gives a more 
general class of such maps, and for these, all powers of $f$ can be distinct. In the simplest example of this type, there is a strictly increasing unbounded sequence $a_0 < a_1 < a_ 2 < \ldots$ in $X$ such
that $f(x) = a_0$ if $x < a_0$ and $f(x) = a_{n+1}$ if $a_n \le x < a_{n+1}$. This is locally identity-constant, but it has infinite order, since for every $n$, $f^n(a_0) = a_n$. It is easy to construct 
more general examples of this type. This does mean that there are always necessarily endomorphisms of $X$ having infinite order. It may be possible to construct examples of dense subchains of $\mathbb R$ 
with trivial embedding and epimorphism monoids, but admitting endomorphisms which are not locally identity-constant, but we have not so far done so.

\section{Results without the axiom of choice}

The main result in this section is to show that the appeal to the axiom of choice in the proof of Theorem \ref{2.3} is really needed. We start by remarking in Theorem \ref{4.1} on how things work out in 
the most well-known Fraenkel--Mostowski model in this context, the so-called Mostowski `ordered' model $\mathcal N$. For this we start in a model $\mathcal M$ of FMC (`Fraenkel--Mostowski with choice', 
obtained from ZFC by altering the axiom of extensionality to allow the existence of `atoms', non-sets which can be members of sets), so that the set $U$ of atoms is indexed by the rational numbers 
$\mathbb Q$, taking the order-preserving permutations as group $G$, and the normal filter of subgroups generated by the pointwise stabilizers of finite subsets of $U$. If we want to arrange 
embedding-rigidity but not epimorphism-rigidity, a modification to the argument is required as in Theorem \ref{4.2}.

\begin{theorem} \label{4.1} In Mostowski's ordered model, the set $U$ of atoms is embedding and epimorphism rigid, and any endomorphism $f$ is locally identity-constant with just finitely many intervals 
on which it is constant. \end{theorem}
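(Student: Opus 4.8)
The plan is to reduce the whole theorem to one structural description of the order-preserving self-maps of $U$ that actually live in $\mathcal N$. Any endomorphism, embedding, or epimorphism $f$ of $(U,\le)$ is an element of the model, so it has a finite support $E=\{e_1<e_2<\cdots<e_n\}\subseteq U$: every order-preserving permutation $\sigma$ of $U$ that fixes $E$ pointwise satisfies $\sigma f=f$, i.e.\ $f(\sigma a)=\sigma(f a)$ for all $a\in U$. Writing $I_0=(-\infty,e_1)$, $I_j=(e_j,e_{j+1})$ for $1\le j<n$, and $I_n=(e_n,\infty)$ for the intervals into which $E$ cuts $U\cong(\mathbb Q,\le)$, I would first prove the structural claim: $f$ maps $E$ into $E$, and on each $I_j$ the restriction of $f$ is either the identity or is constant with value in $E$.

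To prove the claim I would use the homogeneity of $(\mathbb Q,\le)$. For $a\notin E$, any $\sigma$ fixing $E\cup\{a\}$ pointwise fixes $f(a)$, since $f(a)=f(\sigma a)=\sigma(fa)$; but a point is fixed by every order-preserving permutation fixing $E\cup\{a\}$ pointwise exactly when it lies in $E\cup\{a\}$ (any other point sits in an open interval determined by $E\cup\{a\}$ and can be moved). Hence $f(a)\in E\cup\{a\}$ for $a\notin E$, and the same argument with $a\in E$ gives $f(a)\in E$. Next, for two points $a,b$ of the same interval $I_j$, the pointwise stabiliser of $E$ acts on $I_j$ as the full order-automorphism group of $I_j\cong\mathbb Q$, hence transitively, so some such $\sigma$ has $\sigma a=b$, and then $f(b)=\sigma(fa)$. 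If $f(a)=a$ this forces $f(b)=b$, while if $f(a)=e\in E$ it forces $f(b)=e$; thus on each $I_j$ the map $f$ is uniformly the identity or uniformly constant equal to a single element of $E$. Since there are only $n+1$ intervals, at most $n+1$ of them are constancy intervals.

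The three assertions now drop out. For the endomorphism statement, the claim gives $U\setminus E\subseteq C_f\cup I_f$ (a point of an identity interval lies in $I_f$, a point of a constant interval lies in $C_f$); as $E$ is finite this set is cofinite, hence dense, so $f$ is locally identity-constant, and its maximal constancy intervals are built from the finitely many $I_j$, so there are only finitely many. For embedding-rigidity, an embedding is injective and so cannot be constant on an infinite interval $I_j$; hence $f$ is the identity on every $I_j$, and order-preservation squeezes each $e_i$ between $\sup I_{i-1}=e_i$ and $\inf I_i=e_i$, forcing $f(e_i)=e_i$ and $f=\mathrm{id}$. For epimorphism-rigidity, if $f$ were constant on some $I_k$ then, since every piece of $f$ has image in $E$ or in an identity interval and all of these are disjoint from $I_k$, the infinite set $I_k$ would be entirely omitted from the image, contradicting surjectivity; so once more $f$ is the identity on every $I_j$ and therefore $f=\mathrm{id}$.

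The crux of the argument is the structural claim, and within it the step showing that a supported map must send each non-support point either to itself or into the support; the rigidity and density conclusions are then quick checks of injectivity, surjectivity, and cofiniteness. The point to watch is that the ambient symmetry group here is only the stabiliser of the finite set $E$, so one must use that this stabiliser still induces the full order-automorphism group --- in particular transitivity --- on each complementary interval.
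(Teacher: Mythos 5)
Your proof is correct and follows essentially the same route as the paper: extract a finite support $E$, use equivariance of $f$ under the pointwise stabiliser of $E$ to show $f(a)\in E\cup\{a\}$ for every $a$, and read off the locally identity-constant structure with finitely many constancy intervals, from which embedding- and epimorphism-rigidity follow by injectivity and surjectivity respectively. The only differences are cosmetic --- you get uniformity on the complementary intervals via transitivity of the stabiliser where the paper instead observes that the preimages $f^{-1}\{a_i\}$ are finitely many convex sets --- apart from a harmless slip where you write $\sigma f=f$ for what your own displayed identity $f(\sigma a)=\sigma(fa)$ correctly expresses, namely $\sigma f=f\sigma$.
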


\begin{proof} It suffices to verify the final statement, since any locally identity-constant endomorphism which is also an embedding or an epimorphism must be the identity. Since $f \in {\mathcal N}$, there 
is a finite subset $A = \{a_1, a_2, \ldots, a_n\}$ of $U$ such that (in $\mathcal M$) any automorphism of $(U, <)$ fixing $A$ pointwise also fixes $f$. Thus if $g$ lies in the stabilizer $G_A$ of $A$, 
$gf = fg$, which implies that for any $x \in U$, if $g(x) = x$ then $g(f(x)) = f(g(x)) = f(x)$. If $f(x) \not \in A \cup \{x\}$ then there is $g \in G$ fixing $A \cup \{x\}$ pointwise and moving $f(x)$. 
Since we have just remarked that this does not happen, we deduce that $f(x) \in A \cup \{x\}$. Now $f^{-1}\{a_i\}$ are finitely many intervals, and outside $\bigcup_{i = 1}^nf^{-1}\{a_i\}$ all points are 
fixed. Hence $f$ is locally identity-constant.    \end{proof}

We note that in this model, any locally identity-constant endomorphism necessarily has just finitely many intervals on which it is constant, as follows from the above proof. This also follows from the fact 
that $U$ is `o-amorphous', in the sense discussed in \cite{Creed}, as well as the slightly stronger statement that the intervals of constancy have endpoints in $U \cup \{ \pm \infty\}$. Note that in 
addition, $C_f \cup I_f$ actually equals $U$, and is not merely dense.

\begin{theorem} \label{4.2} It is relatively consistent with ZF set theory that there is a dense chain $(X, \le)$ such that ${\rm Emb}(X, \le)$ is trivial, but ${\rm Epi}(X, \le)$ is not. \end{theorem}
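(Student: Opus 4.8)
The plan is to build a Fraenkel--Mostowski permutation model in which a suitable dense chain $(X,\le)$ carries a non-identity epimorphism but admits no non-identity embedding, and then to transfer the resulting statement from ZFA to ZF by the Jech--Sochor embedding theorem (the chain, together with its embedding and epimorphism monoids, is a structure of bounded rank, so the transfer is standard). The guiding principle is that the appeal to the axiom of choice in Theorem \ref{2.3} is made precisely when one selects, for each $x$, a preimage $g(x)\in f^{-1}x$; once $f$ is known this selection is forced except at the points $p$ where $f^{-1}p$ is a whole interval, that is, at the images of the intervals that $f$ collapses. So a right inverse $g$ of $f$ is exactly a choice of one point from each collapsed interval, and I want a model in which $f$ collapses infinitely many intervals while no such simultaneous choice can be made, yet $f$ itself survives.

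Concretely, I would fix a dense chain of atoms $X$ (of order type $\mathbb Q$) together with a distinguished non-injective order-preserving surjection $f$ that collapses infinitely many pairwise disjoint intervals $C_n$, each to a single point $p_n$, and is injective off $\bigcup_n C_n$. A convenient choice is a self-similar ``staircase'' for which surjectivity is built in: on a fundamental block $f$ has one plateau $C_0$ collapsed to $p_0$ and carries the complementary block order-isomorphically onto the whole fundamental block, and this pattern repeats, so that every point --- including the interior points of each $C_n$, which are themselves values of $f$ --- has a preimage. I then take $G$ to be a suitable group of order-automorphisms of $X$ respecting this extra structure (for instance the centralizer of $f$, or ${\rm Aut}$ of $X$ equipped with a predicate naming the self-similar structure), together with the normal filter generated by the pointwise stabilizers $G_A$ of finite $A\subseteq X$. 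By construction $f$ commutes with every element of $G$, so it has empty support and lies in the model $\mathcal N$; as $f$ is a non-identity epimorphism, ${\rm Epi}(X,\le)$ is non-trivial in $\mathcal N$.

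Two things then have to be verified inside $\mathcal N$. First, ${\rm Emb}(X,\le)$ is trivial: an embedding $h$ has a finite support $A$, hence commutes with $G_A$, and the argument of Theorem \ref{4.1} should apply almost verbatim --- for generic $x$ the value $h(x)$ is fixed by $G_{A\cup\{x\}}$, the homogeneity of $G$ forces $h(x)\in A\cup\{x\}$, and then order-preservation and injectivity force $h=\mathrm{id}$. (This is consistent with $f$ being non-injective: any \emph{injective} epimorphism is an automorphism, hence an embedding, so triviality of ${\rm Emb}$ already trivialises every injective epimorphism, and the witnessing $f$ must be, and is, non-injective.) Second, $f$ has no section in $\mathcal N$: a right inverse $g$ would pick $g(p_n)\in C_n$ for every $n$; since there are infinitely many blocks, any finite support misses all but finitely many of them, and for a missed block one can exhibit an element of $G_A$ fixing $p_n$ but moving $g(p_n)$ inside $C_n$, so $g$ is fixed by no $G_A$ and is not in $\mathcal N$. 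Thus in $\mathcal N$ the epimorphism $f$ genuinely has no right-inverse embedding --- exactly the failure of Theorem \ref{2.3} in the absence of choice.

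The main obstacle is to arrange a single pair $(f,G)$ meeting all three demands at once, since they pull in opposite directions. Surjectivity forces $f$ to ``pass through'' the interiors of the collapsed blocks (their points are values of $f$), so that no automorphism can move a point of $C_n$ while fixing the complement of $C_n$; the freedom to move points inside a plateau is instead located near the self-similar fixed points, and one must check that $G$ really does contain, for cofinitely many $n$ and any prescribed finite set, an automorphism moving a prescribed point of $C_n$. Simultaneously $G$ must remain homogeneous enough that the support argument of Theorem \ref{4.1} still collapses every embedding to the identity. Balancing the transitivity needed to defeat every potential section against the rigidity needed to kill every embedding --- with $f$ held fixed throughout --- is the delicate heart of the construction; once a self-similar $f$ with a sufficiently rich centralizer has been pinned down, the remainder is the familiar support bookkeeping followed by the Jech--Sochor transfer to ZF.
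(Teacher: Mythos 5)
You have correctly located where choice enters Theorem \ref{2.3} (the simultaneous selection of preimages across infinitely many collapsed intervals), and your broad strategy coincides with the paper's: a Fraenkel--Mostowski model whose atoms form a dense chain, a group of order-automorphisms commuting with a fixed canonical surjection $f$ so that $f$ has empty support and survives, finite supports to kill everything else, and a transfer to ZF. However, there is a genuine gap, and you name it yourself: you never produce a pair $(f,G)$ for which the ``delicate heart'' works, and the two properties you need (enough transitivity of the centralizer off any finite support, and triviality of {\rm Emb}) are exactly the ones left unverified. This is not mere bookkeeping. For a staircase $f$ on a chain of type $\mathbb Q$ that is injective off its plateaus, an automorphism commuting with $f$ and fixing a point $x$ must fix the whole forward orbit $f(x), f^2(x),\ldots$ and coherently permute the entire tree of $f$-preimages, so it is far from clear that $G_A$ acts with infinite orbits outside $A$; and your plan to kill embeddings by transplanting the homogeneity argument of Theorem \ref{4.1} ``almost verbatim'' is dubious precisely because that argument uses the full automorphism group of $(\mathbb Q,<)$, not a centralizer.

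The paper resolves both difficulties with a specific choice that you should compare against your sketch. The atoms are indexed by the eventually zero sequences of rationals under the \emph{anti-lexicographic} order (still of type $\mathbb Q$), and $f$ is the left shift $f(u_\sigma)=u_\tau$, $\tau(n)=\sigma(n+1)$. Every fiber of $f$ is then a whole copy of $\mathbb Q$ (so $f$ is automatically surjective and non-injective, with no need for a self-similar plateau pattern), and the group of $f$-preserving order-automorphisms is visibly large: after closing a finite support $X$ under $f$ (legitimate, since every atom is sent to $u_{0^\omega}$ by some power of $f$), any atom outside $X$ lies in an infinite $G_X$-orbit, by acting on the last coordinate at which it leaves $X$. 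Moreover, the paper does \emph{not} kill embeddings by your route; it observes that a non-identity embedding $g$ yields the finitely supported injective sequence $(g^n(u))_{n\in\omega}$ of distinct atoms, each term of which would have to be fixed by $G_X$, contradicting the infinite-orbit computation. This Dedekind-finiteness style argument is more robust than the homogeneity argument and avoids analysing the fixed-point set of $G_{A\cup\{x\}}$ altogether. To complete your proof you would either need to adopt a concrete construction of this kind, or actually verify the transitivity and rigidity claims for your staircase centralizer, which as written you have not done.
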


\begin{proof} For this it suffices, by standard set-theoretical techniques, to find a Fraenkel--Mostowski model in which the given statement holds. We let the set of atoms be indexed by the family 
$\mathcal Q$ of all eventually zero sequences of rational numbers. Thus $U = \{u_\sigma: \sigma \in {\mathcal Q}\}$. We linearly order $\mathcal Q$, and hence also $U$, anti-lexicographically. That is, 
$\sigma < \tau$ if there is $i$ such that $\sigma(i) < \tau(i)$ and for all $j > i$, $\sigma(j) = \tau(j)$. If we identify ${\mathbb Q}^n$ with the set of members of $\mathcal Q$ which are zero from the 
$n$th place on, each ${\mathbb Q}^n$ is a convex subset of $\mathcal Q$, and ${\mathcal Q} = \bigcup_{n < \omega}{\mathbb Q}^n$.

Clearly as linearly ordered sets, $\mathcal Q$ and $\mathbb Q$ are isomorphic. We let $f$ be the function from $U$ to $U$ given by $f(u_\sigma) = u_\tau$ where $\tau(n) = \sigma(n+1)$ for all $n$. (In other 
words, $f$ deletes the first entry of $\sigma$ and moves all other entries one place to the left.) We let $G$ be the group of all order-preserving permutations of $U$ under the induced ordering which 
preserve the function $f$.

Now let $U$ be the set of atoms in a model $\mathcal M$ of Fraenkel--Mostowski set theory with choice, and let $\mathcal N$ be the Fraenkel--Mostowski submodel obtained from $U$, with the group $G$ and 
normal filter of subgroups generated by the stabilizers of finite subsets of $\mathcal Q$.

It is clear that $f$ is a non-identity epimorphism of $\mathcal Q$, and so in $\mathcal N$, ${\rm Epi}(U, \le)$ is non-trivial. We just have to show that ${\rm Emb}(U, \le)$ is trivial. Suppose for a 
contradiction that $g \in {\mathcal N}$ is a non-identity order-embedding of $U$, and let $u \in U$ be moved by $g$. Then as $g$ is order-preserving, $g^n(u)$ are all distinct and so $U$ has a countably 
infinite subset in $\mathcal N$. Let this be supported by the finite subset $X$ of $U$. We suppose that $u_{0^\omega}$ (where $0^\omega$ is the all-zero sequence in $\mathcal Q$, identified as the unique 
member of ${\mathbb Q}^0$) lies in $X$, as it is fixed by every automorphism. Observe that every member of $U$ is mapped to $u_{0^\omega}$ by some power of $f$. Consequently, we may assume that $X$ is 
closed under $f$. If $u_h \not \in X$, then $u_h$ lies in an infinite orbit of $G_X$. Let $n$ be greatest such that $u_h$ agrees with some member of $X$ for the first $n$ places. Then for some $q$ and $r$ 
(or allowing $q = -\infty$ and/or $r = \infty$ if necessary) $q$ is the greatest rational, and $r$ is the least, such that some member of $X$ agrees with $h$ for the first $n$ places, and then has 
$q < h(n) < r$. Clearly the automorphism group of $\mathcal Q$ acts transitively on sequences agreeing with $h$ except on the $n$th place, and with $n$th entry between $q$ and $r$, and so the orbit of $h$ 
is infinite as claimed. Therefore no countably infinite subset of $U$ can have finite support.    
\end{proof}  

We remark that in this model, $|U|$ lies in $\Delta \setminus \Delta_5$ in the notation of \cite{Truss}. ($\Delta$ is the class of cardinals of sets admitting no injection to a proper subset, and $\Delta_5$ 
is the class of cardinals of sets not admitting a surjection to a proper superset.)

\section{Higher cardinalities}

We return to the AC situation, but now at higher cardinalities. In \cite{Droste} it was shown how to construct dense chains in any uncountable cardinality $\kappa$ which are rigid but admit many embeddings. 
These ideas can be adapted to consider epimorphisms too. We give two constructions of $(X, <)$, as in \cite{Droste}, first the basic one which is rigid, and demonstrate that it is also epimorphism-rigid, 
and the other which is still epimorphism-rigid, but admits embeddings into every interval (so is far from embedding-rigid). The chains are exactly the same as before, but for completeness we give an outline 
of their construction, concentrating on establishing epimorphism-rigidity. For ease we just deal with the case of $\kappa$ regular.

\begin{theorem} \label{5.1} For any uncountable regular cardinal $\kappa$ there is a dense epimorphism-rigid chain $(X, <)$ without endpoints of cardinality $\kappa$.  \end{theorem}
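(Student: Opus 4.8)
The plan is to adapt the transfinite-induction killing argument of Theorem \ref{2.8}, but now inside a chain of cardinality $\kappa$ rather than $\mathbb R$, using a stationary-set argument in place of the cardinality count $|X|=2^{\aleph_0}$ that drove the real case. Following \cite{Droste}, I would build $(X,<)$ as an increasing union $X=\bigcup_{\alpha<\kappa}X_\alpha$ of a continuous chain of subsets, where each $X_\alpha$ has cardinality $<\kappa$, $X_0$ is a fixed countable dense chain without endpoints (say a copy of $\mathbb Q$), and at limit stages we take unions. To ensure the final $X$ is dense without endpoints it suffices to interleave, at successor stages, enough points to realize all `gaps' and to keep the order unbounded in both directions; since $\kappa$ is regular and uncountable, a continuous union of small dense pieces stays dense. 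The key point, exactly as in the embedding case, is that every potential non-identity epimorphism $g$ of $X$ must be destroyed: we want to arrange, for each such $g$ that could conceivably preserve $X$, a witness point $x$ with $g(x)\notin X$ (equivalently, $g(x)$ forced into a `forbidden' companion set $Y$).

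First I would set up the machinery for recognizing a partial epimorphism of an approximation $X_\alpha$ and its possible extensions. Unlike the real case, here we cannot appeal to Lemma \ref{2.7} to extend maps to an ambient completion and then enumerate all $2^{\aleph_0}$ candidate maps; instead, the number of candidate epimorphisms is $2^\kappa$, which is too large to enumerate in $\kappa$ steps. This is where the stationary-set idea of \cite{Droste} replaces brute enumeration. The mechanism is that for a putative epimorphism $g$ of the finished chain $X$, the set of $\alpha<\kappa$ for which $g$ maps $X_\alpha$ into $X_\alpha$ (i.e. $g\!\restriction\!X_\alpha$ is an epimorphism of $X_\alpha$, or at least a self-map respecting the approximation) contains a club, so by intersecting with the stationary set on which we have planted our `diagonalizing' points we derive a contradiction. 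Concretely, at each stage $\alpha$ lying in a suitably chosen stationary set $S\subseteq\kappa$ I would add a point $x_\alpha$ together with a commitment that forces any order-preserving surjection behaving coherently at stage $\alpha$ to send some designated point outside $X$.

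The key steps, in order, would be: (1) fix the continuous increasing filtration with $|X_\alpha|<\kappa$, each dense without endpoints, closed under the bookkeeping needed; (2) show that any non-identity epimorphism $g$ of $X$ moves some point, and that the `agreement ordinals' $\{\alpha: g[X_\alpha]\subseteq X_\alpha\}$ form a club $C_g$ in $\kappa$ (using regularity: $g$ of a point of $X_\alpha$ lands in some $X_{\beta}$, and continuity lets us close off); (3) arrange at each stage in a fixed stationary $S$ a point whose image under any locally-coherent order-preserving self-surjection would be forced into the forbidden set $Y_\alpha$, disjoint from $X$; (4) intersect $C_g$ with $S$, which is non-empty (indeed stationary), and extract the contradicting witness. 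The main obstacle, and the part I would spend the most care on, is step (2)–(3): making precise how a single surjection $g$ of the whole chain can be `localized' to the approximations so that a stationary reflection argument bites. In particular, surjectivity is a global condition — a point's preimage under $g$ may only appear at a much later stage — so I must verify that the club $C_g$ really can be taken to consist of closure ordinals where $g$ restricts to a self-map of $X_\alpha$, and that the diagonalization at stage $\alpha\in S$ genuinely rules out $g\!\restriction\!X_\alpha$ being extendible to an epimorphism of all of $X$. Once that localization is pinned down, the stationary-versus-club contradiction is the same standard device used for automorphism- and embedding-rigidity in \cite{Droste}, and epimorphism-rigidity follows.
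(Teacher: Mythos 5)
There is a genuine gap, and you have in fact pointed at it yourself: your step (3) is not something that can be carried out. You correctly observe that the $2^\kappa$ candidate epimorphisms cannot be enumerated in $\kappa$ stages, so the Dushnik--Miller/Theorem \ref{2.8} style of killing one map per stage is unavailable. But your proposed substitute --- at each stage $\alpha$ in a fixed stationary $S$, plant a single ``commitment'' that defeats \emph{every} order-preserving surjection that is locally coherent with $X_\alpha$ --- has no mechanism behind it. At stage $\alpha$ there may be up to $2^{|X_\alpha|}$ pairwise incompatible partial self-surjections of $X_\alpha$, each extendible in principle to an epimorphism of $X$, and a single added point (or any set of size $<\kappa$) cannot send a designated point outside $X$ for all of them simultaneously: different candidate maps send the designated point to different places, and you cannot exclude them all while keeping $|X|=\kappa$. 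Making this plan work would require guessing $g\restriction X_\alpha$ at stationarily many $\alpha$, i.e.\ a $\diamondsuit_\kappa$-like principle, which is not a theorem of ZFC for arbitrary regular uncountable $\kappa$. Your step (2), identifying a club of closure ordinals for $g$, is fine but does not repair this.

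The paper's actual argument uses stationary sets in a structurally different way, and this is the idea your proposal is missing. The construction is an $\omega$-length (not $\kappa$-length) union $X=\bigcup_{n\in\omega}X_n$ starting from $\mathbb{L}_\kappa=\kappa\cdot\mathbb{Q}$; at stage $n+1$, immediately to the left of each point $x$ lying in a copy of $\mathbb{Q}$ one inserts a copy of $\mathbb{L}_{\kappa,S_x}$ for a stationary set $S_x$, with the sets $S_x$ pairwise disjoint (Solovay). Thus each such $x$ is \emph{coded}: it acquires cofinality $\kappa$, and the trace on $X$ of a canonical club in $(-\infty,x)\cap\overline{X}$ is exactly $S_x$. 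Rigidity is then proved by showing (as in Lemmas \ref{2.6} and \ref{2.7}(i)) that any epimorphism $f$ extends continuously to the order-completion; if $f$ moves a point, one finds coded points $x\neq y$ with $f(x)=y$, checks that $C=\{\alpha: f(x_\alpha)=y_\alpha\}$ is a club (closedness uses continuity of $f$, which your proposal never engages with), and then $\alpha\in S_x\cap C$ forces $y_\alpha\in X$, i.e.\ $\alpha\in S_y$, contradicting $S_x\cap S_y=\emptyset$. In short: the stationary sets are not diagonalization stages but \emph{codes attached to points of the chain itself}, and the contradiction is ``a club cannot match two disjoint stationary sets,'' not ``the map was killed at some stage.'' Without this (or an equivalent device), your outline does not close.
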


\begin{proof} We consider ${\mathbb L}_\kappa = \kappa . {\mathbb Q}$ under the lexicographic order, $\kappa$ copies of $\mathbb Q$. If $S \subseteq \kappa \setminus \{0\}$ is stationary, we also write 
${\mathbb L}_{\kappa, S}$ for the union of ${\mathbb L}_\kappa$ with $\{(\alpha, -\infty): \alpha \in S\}$, also ordered lexicographically. Thus a new point is added immediately to the left of the 
$\alpha$th copy of $\mathbb Q$, for each $\alpha$ in $S$. We remark that there is a clear notion of `copy of $\mathbb Q$' in ${\mathbb L}_{\kappa, S}$, namely, a point lies in a copy of $\mathbb Q$ if it
has the form $(\alpha, q)$ for some $\alpha < \kappa$, $q \in {\mathbb Q}$. 

We start with a family $\mathcal S$ of $\kappa$ pairwise disjoint stationary subsets of $\kappa$, which is known to exist by \cite{Solovay}, and write $\mathcal S$ as the disjoint union of $\aleph_0$ 
pairwise disjoint sets ${\mathcal S}_n$ of cardinality $\kappa$. 

We build an increasing sequence of dense chains without endpoints $X_0 \subseteq X_1 \subseteq X_2 \subseteq \ldots$ and the final chain is $X = \bigcup_{n \in \omega}X_n$. We 
start with $X_0 = {\mathbb L}_\kappa$. To perform the extension from $X_n$ to $X_{n+1}$ we require many stationary sets to `encode' certain cuts, and to stop there being unwanted epimorphisms, and use the 
families ${\mathcal S}_n$ just introduced. Assume that $X_n$ has been defined, and that there is a notion of `copy of $\mathbb Q$' in $X_n$. For each point $x$ of $X_n$ which lies in a copy of $\mathbb Q$,
we adjoin immediately to its left a set of the form ${\mathbb L}_{\kappa, S}$, where the sets $S$ are distinct (and hence disjoint) members of ${\mathcal S}_n$. Note that copies of $\mathbb Q$ which exist in
$X_n$ are thereby destroyed since their members $x$ now all have cofinality $\kappa$ (meaning that $\kappa$ is the least cardinality of a well-ordered cofinal subset of $(-\infty, x)$), but lots more 
copies of $\mathbb Q$ are added, which in turn are destroyed in passing to $X_{n+2}$ and so on. The idea is that the stationary set $S = S_x$ such that ${\mathbb L}_{\kappa, S}$ is added immediately to the 
left of $x$ acts as a `code' for $x$, and that this is sufficiently robust to be recognizable even in $X$. Note that a point gets encoded if and only if at some stage it lies in a copy of $\mathbb Q$. 
Clearly such points are dense in $X$. They have cofinality $\kappa$ also in $X$, since no more new points are ever added immediately to the left of $x$. All other points arise at some intermediate stage, and 
since they do not lie in any copy of $\mathbb Q$, must have the form $(\alpha, -\infty)$ for some $\alpha < \kappa$; these have cofinality $< \kappa$, which they also retain in $X$.

It was shown in \cite{Droste} that $X$ is (automorphism-)rigid. We adapt that argument to show that it is also epimorphism-rigid. The key point is that any epimorphism $f$ extends (uniquely) to the 
order-completion $\overline X$, where it is continuous. These statements are proved as in Lemmas \ref{2.7}(i) and \ref{2.6}. Now suppose for a contradiction that $f$ is a non-identity epimorphism of $X$, and
let $x'$ be moved by $f$. Let $y' = f(x')$, so that $y' \neq x'$. Suppose that $x' < y'$ (a similar argument applying if $x' > y'$). Since the set of coded points is dense, there is a coded point $y$ such 
that $x' < y < y'$. As $f$ is surjective, $f^{-1}(y)$ is non-empty, and is clearly bounded below. Let $x$ be the infimum of $f^{-1}(y)$. By continuity of $f$, $f(x) = y$, and also for all $z < x$, 
$f(z) < y$. Also, $x < y$. Let $x$ have cofinality $\lambda$. Then the image of an increasing sequence witnessing this is cofinal in $(-\infty, y)$, so as $y$ has cofinality $\kappa$, it follows that 
$\lambda = \kappa$, and hence that $x$ has cofinality $\kappa$. To see that $x$ is also coded, it suffices to note that $x \in X$. Suppose not, for a contradiction, and let $x_\alpha$ for $\alpha < \kappa$
be a strictly increasing sequence of points of $X$ with supremum equal to $x$. Now $X = \bigcup_{n \in {\mathbb N}}X_n$, so as $\kappa > \omega$ is regular, we may pass to a subsequence of $(x_\alpha)$
all of whose entries lie in the same $X_n$, and we take minimal such $n$. Since any increasing $\kappa$-sequence in ${\mathbb L}_\kappa$ is unbounded, $n > 0$. Since $x_\alpha \in X_n \setminus X_{n-1}$, 
$x_\alpha$ lies in a copy of some ${\mathbb L}_{\kappa, S}$ immediately to the left of a unique $x_\alpha'$ in a copy of $\mathbb Q$ in $X_{n-1}$. Since ${\mathbb L}_{\kappa, S}$ has no bounded increasing 
$\kappa$-sequence, $\kappa$ of the $x_\alpha'$ are distinct, and this gives a strictly increasing sequence of points of $X_{n-1}$ having $x$ as supremum, contrary to minimality of $n$. Hence $x$ lies in $X$ 
and is coded.

Let $L_x$ be the subset of the order-completion of ${\mathbb L}_{\kappa, S_x}$ of `infinite' points $x_\alpha = (\alpha, -\infty)$ for $\alpha \in \kappa \setminus \{0\}$, where this is the set that was 
added immediately to the left of $x$ at stage $n$ in the construction. Then $L_x$ is a closed unbounded subset of $(-\infty, x) \cap {\overline X}_{n+1}$, and it is also closed unbounded in 
$(-\infty, x) \cap {\overline X}$ since no cut immediately to the left of any of its points is ever realized. Moreover, $L_x \cap X = S_x$. The same remarks apply to $y$ and points $y_\alpha$. In 
particular, $L_y \cap X = S_y$. We can now see that $C = \{\alpha \in \kappa: f(x_\alpha) = y_\alpha\}$ is a closed unbounded subset of $\kappa$. Closure follows from the facts that $L_x$ and $L_y$ are 
closed, and that $f$ is continuous. For unboundedness, consider any $\alpha \in \kappa$. We form a sequence $\alpha = \alpha_0 \le \alpha_1 \le \alpha_2 \le \ldots$ in $\kappa$ thus. Suppose that $\alpha_n$ 
has been chosen, $n$ even. Then $f(x_{\alpha_n}) < y$ and so $f(x_{\alpha_n}) \le y_{\alpha_{n+1}}$ for some $\alpha_{n+1} \ge \alpha_n$. If $\alpha_n$ has been chosen, $n$ odd, $f^{-1}(y_{\alpha_n})$ is 
bounded below $x$, and so there is some $\alpha_{n+1} \ge \alpha_n$ such that $f^{-1}(y_{\alpha_n}) \le x_{\alpha_{n+1}}$. Hence $y_{\alpha_n} \le f(x_{\alpha_{n+1}})$. Let $\beta$ be the supremum of the 
increasing sequence $(\alpha_n: n \in \omega)$. Thus $f(x_{\alpha_0}) \le y_{\alpha_1} \le f(x_{\alpha_2}) \le y_{\alpha_3} \le \ldots$, so 
$\sup_{n \in {\mathbb N}}f(x_{\alpha_n}) = \sup_{n \in {\mathbb N}} y_{\alpha_n}$. By continuity of $f$, $f(x_\beta) = f(\sup_{n \in {\mathbb N}}x_{\alpha_n}) = \sup_{n \in {\mathbb N}}f(x_{\alpha_n}) =
\sup_{n \in {\mathbb N}}y_{\alpha_n} = y_\beta$, so $\beta \in C$.

Since $S_x$ is stationary, it intersects $C$. Let $\alpha \in S_x \cap C$. Then $x_\alpha \in X$ since $\alpha \in S_x$, and $f(x_\alpha) = y_\alpha$ since $\alpha \in C$. Therefore $y_\alpha \in X$, and
so $\alpha \in S_y$. This contradicts the assumption that $S_x \cap S_y = \emptyset$.     \end{proof}

We have been unable to determine whether the chain constructed in this theorem is also embedding-rigid, the main problem being that there is no reason why embeddings should be continuous. 

The second result is a strengthening of the first in which it is very much {\em not} embedding-rigid.

\begin{theorem} \label{5.2} For any uncountable regular cardinal $\kappa$ there is a dense chain $(X, <)$ without endpoints of cardinality $\kappa$ that is epimorphism-rigid but which embeds into any 
non-empty open interval.  \end{theorem}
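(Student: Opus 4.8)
The plan is to take for $X$ exactly Droste's second chain from \cite{Droste}, built as a self-similar variant of the chain of Theorem \ref{5.1}, and to obtain epimorphism-rigidity by rerunning the stationary-set argument of that theorem essentially verbatim. As in Theorem \ref{5.1} I would construct $X = \bigcup_{n \in \omega} X_n$ by repeatedly adjoining copies of $\mathbb{L}_{\kappa, S}$, with the sets $S$ drawn from a family of $\kappa$ pairwise disjoint stationary subsets of $\kappa$, so that each point which ever lies in a copy of $\mathbb{Q}$ becomes a \emph{coded} point of cofinality $\kappa$, with code $S_x$ satisfying $L_x \cap X = S_x$ for the club $L_x$ of infinite points added to its left. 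The one change from Theorem \ref{5.1} is that the recursion is arranged \emph{self-similarly}: the whole pattern of extensions is reflected inside every interval, so that each non-empty open interval of $X$ contains a subchain order-isomorphic to $X$. All the robustness features of the coding (coded points are dense, have cofinality $\kappa$, and carry pairwise disjoint codes) are to be retained.

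Granting the self-similarity, the embedding claim is immediate: given any non-empty open interval $(a,b)$ of $X$, it contains a copy of $X$, and the inclusion of that copy is the desired order-embedding of $X$ into $(a,b)$. Here I would stress that these embeddings are neither surjective nor continuous, which is precisely why they do not conflict with epimorphism-rigidity: by the remark following Theorem \ref{2.3}, an embedding need not possess a left inverse, so no epimorphism is manufactured from them (were such an embedding $g$ to have a left inverse $f$, then $f$ would be an epimorphism, hence the identity, forcing $g$ to be the identity too).

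For epimorphism-rigidity I would copy the proof of Theorem \ref{5.1}. Suppose $f$ is a non-identity epimorphism of $X$; by Lemmas \ref{2.6} and \ref{2.7}(i) it extends uniquely to a continuous epimorphism of the order-completion $\overline{X}$. Pick $x'$ with $f(x') \neq x'$, say $x' < f(x')$, and using density of coded points choose a coded $y$ with $x' < y < f(x')$. Setting $x = \inf f^{-1}(y)$, continuity gives $f(x) = y$ and $f(z) < y$ for all $z < x$; comparing cofinalities (the image of a witnessing sequence for the cofinality of $x$ is cofinal below $y$, which has cofinality $\kappa$) forces $x$ to have cofinality $\kappa$, and the same regularity argument inside $X = \bigcup_n X_n$ as in Theorem \ref{5.1} shows $x \in X$, so $x$ is coded. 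Writing $L_x, L_y$ for the clubs of infinite points to the left of $x$ and $y$, the set $C = \{\alpha < \kappa : f(x_\alpha) = y_\alpha\}$ is closed (by continuity of $f$ and closure of $L_x, L_y$) and unbounded (by the back-and-forth construction of $\alpha_0 \le \alpha_1 \le \cdots$ producing a supremum $\beta \in C$). As $S_x$ is stationary, fix $\alpha \in S_x \cap C$: then $x_\alpha \in X$ and $f(x_\alpha) = y_\alpha$, whence $y_\alpha \in X$ and $\alpha \in S_y$, contradicting $S_x \cap S_y = \emptyset$.

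The main obstacle is not this argument, which transfers mechanically, but verifying that the self-similar modification does not damage the coding. Specifically I would need to check that reflecting the construction into every interval still leaves each coded point with cofinality exactly $\kappa$ in the final $X$ (so that no further points are ever inserted immediately to its left), that distinct coded points keep disjoint codes, and that the step placing $x = \inf f^{-1}(y)$ back in $X$ still goes through, i.e.\ that an increasing $\kappa$-sequence with supremum $x \notin X$ can still be pushed down to an earlier $X_n$, contradicting minimality of $n$. Once the self-similarity is set up so as to preserve these three properties, both halves of the theorem follow.
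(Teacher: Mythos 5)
You have the right overall strategy --- it is in fact the paper's: keep the stationary-set coding of Theorem \ref{5.1} so that the rigidity argument transfers essentially verbatim, and rebuild the chain so that it embeds into every interval --- and your transcription of the rigidity half, together with the observation that the new embeddings are neither continuous nor left-invertible and so cannot manufacture epimorphisms, is sound. The genuine gap sits exactly where you park the difficulty: ``arrange the recursion self-similarly'' is not a construction, and the three properties you list at the end (coded points keep cofinality $\kappa$, distinct coded points keep pairwise disjoint codes, $\inf f^{-1}(y) \in X$) are not side conditions to be checked afterwards but the entire content of the theorem. In particular there is a real tension you do not resolve: if every interval is to contain a copy of $X$ and the copies are built from the \emph{same} stationary sets, then distinct coded points acquire equal codes and the final contradiction $S_x \cap S_y = \emptyset$ evaporates; if instead the copies use \emph{fresh} stationary sets, they are no longer literal copies of the same coded structure, and you must say what the embeddings do to the codes.

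The paper's resolution, which is the missing idea, is to replace the single blocks ${\mathbb L}_{\kappa, S}$ by concatenations ${\mathbb L}_{\kappa, \underline{S}} = {\mathbb L}_{S_0}\,^\wedge \ldots ^\wedge {\mathbb L}_{S_{m-1}}$ indexed by finite \emph{sequences} of stationary sets, to code a point by the \emph{final} entry of its sequence, and to insist only that these final entries be pairwise disjoint across the whole construction. One then builds a tree of auxiliary chains $X_\sigma$ for $\sigma$ in a set $\Sigma$ of finite sequences, in which the block of $X_\sigma$ with code sequence $(S_{\alpha ()}, S_{\alpha (\sigma(0))}, \ldots, S_{\alpha \sigma})$ embeds as a proper initial segment of the longer block $(S_{\alpha ()}, \ldots, S_{\alpha \sigma}, S_{\alpha, \sigma^\wedge\langle i\rangle})$ of $X_{\sigma^\wedge\langle i\rangle}$, with $\kappa$ spare copies reserved at each stage to absorb the newly created points; the final $X$ is the union of all the $X_\sigma$ glued into irrational cuts, and the composite maps $f_{\sigma,\tau}$ supply the embeddings into every non-trivial interval. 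These embeddings lengthen (hence change) the codes of the points they move, which is harmless because only the epimorphisms --- being continuous --- are forced to respect the codes; that is precisely how self-similarity coexists with pairwise disjoint coding. Without this device, or an equivalent one, your proposal does not yet yield the theorem.
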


\begin{proof} We modify the basic construction of \ref{5.1}. First we work instead with finite sequences ${\underline S} = (S_0, \ldots, S_{m-1})$ of stationary sets rather than single ones, and write 
${\mathbb L}_{\kappa, {\underline S}}$ for the concatenation ${\mathbb L}_{S_0}\,^\wedge \ldots ^\wedge {\mathbb L}_{S_{m-1}}$. If $\mathcal A$ is a family of $\kappa$ finite non-empty sequences of
stationary subsets of $\kappa \setminus \{0\}$, whose final entries are pairwise disjoint, divided into $\aleph_0$ disjoint subsets ${\mathcal A}_n$ of size $\kappa$, we let 
$X_{\mathcal A} = \bigcup_{n \in \omega}X_n$ where $X_0 = {\mathbb L}_\kappa$, and $X_{n+1}$ is obtained from $X_n$ by inserting sets of the form ${\mathbb L}_{\kappa, {\underline S}}$ for 
$s \in {\mathcal A}_n$ immediately to the left of all points of $X_n$ lying in a copy of $\mathbb Q$. This is exactly as in \ref{5.1}, except that `larger' sets are inserted at each stage. The pairwise 
disjointness of the final entries guarantees that a dense set of points is again `coded' by stationary sets.

In order to construct a chain which also admits many embeddings, we modify this construction. We let $\mathcal S$ be a family of $\kappa$ pairwise disjoint stationary subsets of $\kappa \setminus \{0\}$, 
and let $I_0 \subset I_1 \subset I_2 \subset \ldots$ be sets of cardinality $\kappa$ such that each $I_{n+1} \setminus I_n$ also has cardinality $\kappa$. If $\Sigma$ is the family of all finite sequences 
$(i_0, i_1, \ldots , i_{n-1})$ such that $i_j \in I_j$ for each $j$, then $|\kappa \times \Sigma| = \kappa$, so we may let ${\mathcal S} = \{S_{\alpha \sigma}: \alpha \in \kappa, \sigma \in \Sigma\}$. For 
each $\sigma \in \Sigma$, we let ${\mathcal A}_\sigma$ be the family of all sequences $(S_{\alpha ()}, S_{\alpha (\sigma(0))}, S_{\alpha (\sigma(0) \sigma(1))}, \ldots, S_{\alpha \sigma})$ for 
$\alpha < \kappa$. A key point is that the stationary sets which arise as the final entry of some such sequence are pairwise disjoint. Using the method described in the previous paragraph, we let 
$X_\sigma = X_{{\mathcal A}_\sigma}$ for each $\sigma \in \Sigma$. 

Now ${\mathbb L}_{(S_{\alpha ()}, S_{\alpha (\sigma(0))}, S_{\alpha (\sigma(0) \sigma(1))}, \ldots, S_{\alpha \sigma})}$ embeds naturally into \newline
${\mathbb L}_{(S_{\alpha ()}, S_{\alpha (\sigma(0))}, S_{\alpha (\sigma(0) \sigma(1))}, \ldots, S_{\alpha \sigma}, S_{\alpha, \sigma^\wedge\langle i\rangle})}$, and we want to ensure that there is a
corresponding embedding $f_{\sigma i}$ of $X_\sigma$ into $X_{\sigma^\wedge\langle i\rangle}$ (which will definitely not be continuous), and to achieve this, we have to choose $X_\sigma$ rather more 
carefully, using induction on the length of $\sigma$. For the empty sequence, the construction is as usual. Now assuming that $X_\sigma$ has been chosen, embed each
${\mathbb L}_{(S_{\alpha ()}, S_{\alpha (\sigma(0))}, S_{\alpha (\sigma(0) \sigma(1))}, \ldots, S_{\alpha \sigma})}$ into  
${\mathbb L}_{(S_{\beta ()}, S_{\beta (\sigma(0))}, S_{\beta (\sigma(0) \sigma(1))}, \ldots, S_{\beta \sigma}, S_{\beta, \sigma^\wedge\langle i\rangle})}$ for some $\beta$, leaving $\kappa$ values of 
$\beta$ untouched, and use these `spare' copies to fill in all the additional points which have now arisen and which need to be filled during the construction. By composing, we get embeddings 
$f_{\sigma,\tau}$ of $X_\sigma$ into $X_{\sigma ^\wedge \tau}$.

The final chain $X$ is the disjoint union of all the $X_\sigma$ for $\sigma \in \Sigma$, and the relation between points lying in distinct $X_\sigma$s is determined by inserting the whole of 
$X_{\langle i\rangle ^\wedge \sigma}$ into a certain irrational cut of $X_\sigma$, where by `irrational cut' we mean, an irrational cut in a copy of $\mathbb Q$ that appears at some stage. The full details 
as given in \cite{Droste} are omitted. The main points are that the fact that this $X$ is epimorphism-rigid is proved just as in the previous theorem, since the set of encoded points is dense, and each 
encoded point is coded by a distinct stationary set, and again as in \cite{Droste} one checks using the embeddings $f_{\sigma,\tau}$ that the whole of $X$ can be embedded into every non-trivial interval.
\end{proof}


\end{document}